\def\layersep{3.2cm}      % Horizontal distance between layers
\def\diagramsep{9.5cm}    % Horizontal distance between diagrams
\def\nodesep{.3cm}        % Vertical distance between nodes
\def\labsep{0.8cm}        % Vertical separation for labels
\def\globalshift{-2.2cm}  % Negative value shifts everything left!
\def\outputshift{0.8cm}
\def\eqref#1{equation~\ref{#1}}
\def\Eqref#1{Equation~\ref{#1}}
\def\1{\bm{1}}
\DeclareMathAlphabet{\mathsfit}{\encodingdefault}{\sfdefault}{m}{sl}
\SetMathAlphabet{\mathsfit}{bold}{\encodingdefault}{\sfdefault}{bx}{n}
\newcommand{\R}{\mathbb{R}}
\DeclareMathOperator*{\argmax}{arg\,max}
\theoremstyle{plain}
\newtheorem{theorem}{Theorem}[section]
\newtheorem{proposition}[theorem]{Proposition}
\theoremstyle{definition}
\newtheorem{definition}[theorem]{Definition}
\theoremstyle{remark}
\newtheorem{remark}[theorem]{Remark}
\title{Universal Representation of Generalized Convex Functions and their Gradients}
\author{Moeen Nehzati\\Department of Economics, New York University\\\texttt{moeen.nehzati@nyu.edu}}
\date{}
\begin{document}
\maketitle

\begin{abstract}
    A wide range of optimization problems can often be written in terms of generalized convex functions (GCFs). When this structure is present, it can convert certain nested bilevel objectives into single-level problems amenable to standard first-order optimization methods. We provide a new differentiable layer with a convex parameter space and show (Theorems~\ref{fc-density} and~\ref{nablafc-density}) that it and its gradient are universal approximators for GCFs and their gradients. We demonstrate how this parameterization can be leveraged in practice by (i) learning optimal transport maps with general cost functions and (ii) learning optimal auctions of multiple goods. In both these cases, we show how our layer can be used to convert the existing bilevel or min-max formulations into single-level problems that can be solved efficiently with first-order methods.
\end{abstract}

% \paragraph*{Acknowledgments.} I thank Alfred Galichon and Pegah Alipoormolabashi for helpful discussions and comments.

\section{Introduction}\label{sec:intro}
This paper targets a common need in machine learning: principled, differentiable parameterizations that encode structure beyond generic universal approximators. We study generalized convex functions (GCFs) and their gradients, and provide a practical parameterization with universal approximation guarantees and a convex parameter space.

Generic shallow and deep neural networks (SNNs and DNNs) can approximate a wide variety of functions, but they are not the best tool for all tasks. Their flexibility can come at the cost of losing important structural properties and requiring more data. In many applications we instead exploit symmetries or shape constraints. For instance, translation-invariant image classifiers are modeled with convolutional neural networks (CNNs), which are universal approximators for translation-invariant functions \citep{yarotsky2022universal}.

Convex functions and their gradients are two particularly useful structured classes that have recently received increased attention from a parameterization perspective. In practice, however, many objects of interest are not convex but share key properties with convex functions. Generalized convexity relaxes convexity to capture precisely this broader structure (see \citet{singer1997abstract} for a survey).

Generalized convexity extends classical convexity by replacing the bilinear pairing in convex conjugation with an application-dependent kernel/surplus $\Phi(x,y)$. Section~\ref{sec:background} gives the formal definition. Two motivating examples in this paper are optimal transport and mechanism design: in optimal transport we work with the surplus $\Phi(x,y)=-c(x,y)$ (the negative of the cost of moving mass from $x$ to $y$) and the Kantorovich dual potentials can be chosen GCFs; in quasilinear mechanism design, the buyer's indirect utility is a GCF where $\Phi(x,y)$ is the value a buyer of type $x$ gets from outcome $y$. Section~\ref{sec:applications} reviews these connections and their role in bilevel formulations.

Despite advances in parameterizing convex functions and their gradients, relatively little work has addressed GCFs. As a result, when learning GCFs, existing methods often ignore their structural properties, especially in bilevel and min--max settings such as optimal auctions with multiple goods and optimal transport with general costs. These problems are typically much harder to solve numerically than single-level optimization, and the lack of structure-aware parameterizations has limited the scope of problems that can be tackled with theoretical guarantees.

The goal of this paper is to close this gap by developing universal approximators for GCFs and their gradients and demonstrating how this theoretical machinery can be used in practice. We show that our parameterization can recover and extend classical convex-analytic constructions while remaining amenable to gradient-based training in modern ML workflows.

\paragraph{Contributions.} We summarize our main contributions:
\begin{itemize}
  \setlength{\topsep}{0pt}
  \setlength{\itemsep}{1pt}
  \setlength{\parsep}{0pt}
  \setlength{\parskip}{0pt}
  \setlength{\partopsep}{0pt}
  \item A differentiable parameterization of GCFs with a convex parameter space, enabling first-order optimization.
  \item Universal approximation results for both GCFs and their gradients under mild regularity conditions on the cost/surplus kernel.
  \item A neural-network interpretation that connects finitely $Y$-convex parameterizations to shallow architectures with $\max$ aggregation, suggesting deeper analogues.
  \item An open-source implementation with experiments on multi-item auction design and optimal transport that instantiate the theory.
\end{itemize}

The remainder of the paper is organized as follows. Section~\ref{sec:related} reviews related work on parameterizing convex functions and their gradients. Section~\ref{sec:background} introduces convexity and generalized convexity. Section~\ref{sec:method} presents the main theoretical results on the parameterization and universal approximation of GCFs and their gradients. Section~\ref{sec:applications} reviews how GCFs arise in optimal transport and mechanism design and Section~\ref{sec:experiments} presents empirical results on these applications using the proposed parameterization. We conclude in Section~\ref{sec:conclusion} with some closing remarks.

\paragraph{Conflict of Interest Disclosure}
The author declares no financial conflicts of interest.

\section{Related Work}\label{sec:related}

The effectiveness of neural networks is partly due to their Universal Approximation Property (UAP): any sufficiently regular function can be approximated by a large enough neural network, whether shallow or deep, a fact studied extensively in, e.g., \citet{hornik1989multilayer, pinkus1999approximation, liang2016deep, lu2021deep}.

Closer to our context is the literature on parameterizing and approximating convex functions. Perhaps the most natural scheme is the max-affine parameterization: any convex function can be represented as the supremum of possibly infinitely many affine functions (its subgradients). Choosing the maximum of finitely many affine functions underlies max-affine regression, as explored in \citet{balazs2015near}. \citet{calafiore2019log} and \citet{kim2022parameterized} show how the maximum can be replaced with the Log-Sum-Exp (LSE) function to yield smooth approximations. Other works, such as \citet{warin2023groupmax} and \citet{amos2017input}, propose more sophisticated multi-layered parameterizations, while \citet{magnani2009convex} study piecewise linear convex functions.

Another line of research concerns the approximation and parameterization of gradients. In contrast to the one-dimensional case, not every vector field $f: \R^n \to \R^n$ is the gradient of some scalar-valued function $g: \R^n \to \R$. When $g$ is smooth, a necessary condition for $f = \nabla g$ is the Jacobian $J_f$ being symmetric, since it equals the Hessian $H_g$, which can be hard to impose. A naive idea is to parametrize gradients by differentiating parametrizations of scalar functions (for example, by using derivatives of neural networks to approximate derivatives of functions), but this approach can fail \citep{saremi2019approximating}. Even if $f_n \to f$, it does not necessarily follow that $\nabla f_n \to \nabla f$; for example:

\begin{align}\label{eq:grad-counterexample}
&f_n(x) = \frac{1}{n} \sin(nx) \to 0 \text{ yet }f_n'(x) = \cos(nx) \not\to 0.
\end{align}

These problems make UAP results for gradients less common and much harder to obtain. As discussed in Section~\ref{sec:method}, this difficulty disappears when the functions (and their limits) are convex; \citet{chaudhari2024gradient} uses this fact to construct universal approximators for gradients of convex functions. \citet{richter2021input} and \citet{lorraine2024jacnet} pursue a different approach by parameterizing the second derivative (the symmetric positive definite Hessian) and integrating it via neural ordinary differential equations \citet{chen2018neural}.

On the practical side, such parameterizations are routinely used to learn convex objects end-to-end: (i) convex potentials whose gradients yield transport maps or Wasserstein barycenters \citep{makkuva2020optimal, fan2020scalable}; (ii) convex value/energy functions embedded in model-based control and optimization loops \citep{chen2018optimal}; (iii) convex potentials that parameterize generative flows grounded in optimal transport \citep{huang2020convex}; and (iv) convex functionals over probability measures optimized directly \citep{alvarez2021optimizing}. In these setups, "finding something convex" is the goal by design, making ICNNs and related architectures a natural fit. Complementarily, differentiable convex optimization layers embed convex programs within networks, enabling end-to-end training with convex structure \citep{amos2017optnet, agrawal2019differentiable}, and differentiable MPC implements convex optimization-based control policies in a learnable manner \citep{amos2018dmpc}. More broadly, deep declarative networks provide a unifying view of embedding optimization problems as differentiable layers \citep{campbell2020ddn}.

In contrast to convexity, computational aspects of generalized convexity remain less explored. For surveys of the mathematical theory, see \citet{van1993theory, pallaschke2013foundations, singer1997abstract, rubinov2013abstract}. GCFs are ubiquitous in many areas of applied mathematics, particularly in optimal transport, matching, and game theory. After introducing GCFs in Section~\ref{sec:background}, we showcase some of their applications in Section~\ref{sec:applications}. While there has been substantial work on parameterizations of convex functions and their gradients, far less has been done for GCFs. We provide analogous results for parameterization and approximation of GCFs, and establish universal approximation guarantees for generalized convex functions and their gradients (Theorems~\ref{fc-density} and~\ref{nablafc-density}).

On the mechanism design or pricing side, some works jointly learn a mechanism together with a model of buyer behavior, leading to bilevel training procedures \citep{rahme2020auction, nedelec2019adversarial, shen2018automated}.
Other approaches rely on multi-agent reinforcement learning to learn mechanisms and agent policies simultaneously, which also induces a bilevel structure \citep{zheng2022ai,koster2022human}.

On the OT side, the computational difficulties arise from enforcing the marginal constraints: in the primal formulation, the transport plan must have prescribed marginals, while in the dual formulation the potentials must satisfy the Kantorovich inequality. To avoid handling these constraints explicitly, several works embed them into unconstrained min-max objectives. On the primal side, \citet{rout2021generative} learn a transport map through an adversarial consistency objective that implicitly enforces the pushforward constraint. On the dual side, \citet{makkuva2020optimal, korotin2022neural, korotin2022kernel} introduce min-max formulations in which the Kantorovich feasibility
constraint is encoded directly in the objective through neural parameterizations of the dual potentials.

% \citep{shen2020reinforcement, pmlr-v119-deng20d, balcan2008reducing}. This historical lack of suitable numerical optimization machinery has also pushed researchers to focus on simpler problems with analytical solutions. The methods developed in this paper aim to enable the study richer problems involving GCFs through computational approaches.

\section{Background}\label{sec:background}

\subsection{Convexity}

Let $V$ be a Euclidean space and $V^*$ its dual. Take $g:\,V^*\to\overline{\R}$ to be an extended-real-valued function. Since we work in Euclidean spaces, we identify $V^{**}$ with $V$. Under this identification, the Legendre transform (convex conjugate) of $g$ can be viewed as a function on $V$ defined by
\begin{align*}
g^*\, : V \to \overline{\R} \qquad \qquad
g^*(x) = \sup_{y \in V^*} \{ \langle x \mid y \rangle - g(y) \}
\end{align*}
The Legendre transform is central to convex analysis: (under standard regularity conditions) a function $f:\,V\to\overline{\R}$ is convex if and only if it coincides with the Legendre transform of another function: $f = g^*$.

\subsection{Generalized Convexity}

Since the Legendre transform is defined via a supremum over affine functions, a natural generalization replaces the bilinear pairing with a more general bivariate function. We replace $V$ and $V^*$ with sets $X$ and $Y$ (typically subsets of Euclidean spaces). We then define the `kernel' $\Phi: X \times Y \to \R$ to take place of the pairing.

Everything that follows depends on the choice of kernel $\Phi$ but we suppress it in the notation for brevity.

We also allow the supremum to be taken over a subset $\tilde{Y} \subseteq Y$ (throughout, a tilde indicates an arbitrary subset). When $g$ is defined on $\tilde{Y}$, we denote its $\tilde{Y}$-transform by a superscript $\tilde{Y}$:
\begin{align*}
g^{\tilde{Y}}\,: X \to \overline{\R} \qquad \qquad
g^{\tilde{Y}}(x) = \sup_{y \in \tilde{Y}} \{ \Phi(x, y) - g(y) \}
\end{align*}
When $\Phi(x,y)=\langle x \mid y \rangle$ and $\tilde{Y}=Y$, these definitions recover the classical Legendre transform (in the classical case one takes $Y=X^*$). Unlike standard convexity theory, we allow for functions defined on a subset of $X$. We say a function $f$ is $\tilde{Y}$-convex if it is the restriction of a $\tilde{Y}$-transform to the domain of $f$:
\begin{align*}
    f = (g^{\tilde{Y}})_{|\mathrm{dom}(f)}
\end{align*}
Finally, denote the set of $\tilde{Y}$-convex functions defined on $\tilde{X}$ by $\mathcal{C}^{\tilde{Y}}(\tilde{X})$. Define $\tilde{X}$-transform, $\tilde{X}$-convexity, and $\mathcal{C}^{\tilde{X}}(\tilde{Y})$ analogously.

Appendix~\ref{appendix:generalized-convexity} proves some known properties of generalized convexity analogues to those of standard convexity. Those properties are used in the proofs of theorems in later sections.

\section{Method: Parameterizing Generalized Convex Functions}\label{sec:method}

Due to the symmetry between $X$ and $Y$, we focus on parameterizing $\mathcal{C}^{Y}(X)$, the space of $Y$-convex functions defined over $X$. We assume that the surplus $\Phi$ is locally Lipschitz. From now on, we also assume $X$ and $Y$ are compact; therefore $\Phi$ is (globally) Lipschitz on $X\times Y$. In particular, $x\mapsto \Phi(x,y)$ is Lipschitz with a constant uniform in $y$. Hence any $Y$-convex function is Lipschitz, since it is the supremum of a family of Lipschitz functions sharing the same Lipschitz constant.

We define a function to be finitely $Y$-convex if it is the $\tilde{Y}$-transform of some function where $\tilde{Y} \subseteq Y$ is finite (recall: a tilde indicates an arbitrary subset). Denote the space of all finitely $Y$-convex functions defined over $X$ by $\mathcal{FC}^Y(X)$:
\begin{align*}
\mathcal{FC}^Y(X) = \bigcup_{\tilde{Y} \subseteq Y \land |\tilde{Y}|<\infty} \mathcal{C}^{\tilde{Y}}(X)
\end{align*}
Parameterizing $\mathcal{FC}^Y(X)$ is straightforward. Fix a finite $\tilde{Y}$. For this fixed support set, the space $\mathcal{C}^{\tilde{Y}}(X)$ is parameterized by the intercept values $p\in\R^{\tilde{Y}}$. In practice, we also treat the support locations as parameters, writing $\tilde{Y}=\{y^1,\dots,y^n\}$ with $(y^1,\dots,y^n)\in Y^n$ and optimizing jointly over $(y^1,\dots,y^n,p_1,\dots,p_n)$. We additionally assume $Y\subseteq \R^d$ is convex so that $Y^n\times \R^n$ is a convex parameter space. This convexity assumption is only used for the convex-parameter-space claim; it is not needed for the approximation results below.

Our first result shows that finitely $Y$-convex functions can uniformly approximate $Y$-convex functions.

\begin{proposition}[Uniform approximation of $Y$-convex functions] \label{uniform-approx}
Given any $\epsilon > 0$, there is a finite $\tilde{Y} \subseteq Y$ such that for any $Y$-convex function $f \in \mathcal{C}^Y(X)$, there exists $g \in \mathcal{C}^{\tilde{Y}}(X)$ such that
\begin{align*}
|f - g|_{\infty} < \epsilon
\end{align*}
\end{proposition}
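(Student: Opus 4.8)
The plan is to realize the approximant $g$ as the $\tilde{Y}$-transform of $f^{X}$ restricted to a finite $\delta$-net $\tilde{Y}$ of $Y$, and to show that one net --- depending only on $\epsilon$ and on a Lipschitz constant of $\Phi$, not on $f$ --- works for every $Y$-convex $f$ at once. First I would fix a Lipschitz constant $L$ for $\Phi$ on the compact set $X\times Y$ (which exists since $\Phi$ is locally Lipschitz and $X\times Y$ is compact, as already invoked in the text). For any $f\in\mathcal{C}^{Y}(X)$, $f$ is Lipschitz, hence bounded, on the compact $X$, so $f^{X}(y)=\sup_{x\in X}\{\Phi(x,y)-f(x)\}$ is finite for every $y$; as a supremum of the $L$-Lipschitz functions $y\mapsto\Phi(x,y)-f(x)$ it is itself $L$-Lipschitz on $Y$, with a modulus that does not depend on $f$. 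This uniform Lipschitz control is the whole point.

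Next, fix $\delta>0$ with $2L\delta<\epsilon$ and, using total boundedness of the compact set $Y$, pick a finite $\tilde{Y}\subseteq Y$ such that every point of $Y$ is within distance $\delta$ of $\tilde{Y}$. Given $f$, let $r\in\Theta_{\tilde{Y}}=\R^{\tilde{Y}}$ be defined by $r(y)=f^{X}(y)$ and set $g=p_{\tilde{Y}}(r)=r^{\tilde{Y}}$, so that
\begin{align*}
g(x)=\sup_{y\in\tilde{Y}}\{\Phi(x,y)-f^{X}(y)\},
\end{align*}
and $g\in\mathcal{C}^{\tilde{Y}}(X)$ by definition. By Lemma~\ref{lem:biconjugation}, $Y$-convexity of $f$ gives $f(x)=\sup_{y\in Y}\{\Phi(x,y)-f^{X}(y)\}$ on $X$; since $\tilde{Y}\subseteq Y$, restricting the supremum to the smaller set yields $g\le f$ pointwise.

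For the matching lower bound, fix $x\in X$. The map $y\mapsto\Phi(x,y)-f^{X}(y)$ is continuous on the compact set $Y$, so it attains its maximum --- equal to $f(x)$ --- at some $y^{\ast}$. Choosing $y'\in\tilde{Y}$ with $\norm{y^{\ast}-y'}<\delta$ and applying the $L$-Lipschitz bounds on $\Phi(x,\cdot)$ and on $f^{X}$ gives
\begin{align*}
g(x)\ \ge\ \Phi(x,y')-f^{X}(y')\ \ge\ \bigl(\Phi(x,y^{\ast})-L\delta\bigr)-\bigl(f^{X}(y^{\ast})+L\delta\bigr)\ =\ f(x)-2L\delta.
\end{align*}
Thus $0\le f(x)-g(x)\le 2L\delta$ for all $x$; since $f-g$ is continuous on the compact $X$, $|f-g|_{\infty}\le 2L\delta<\epsilon$, proving the claim.

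The main obstacle is precisely the uniformity in $f$: a priori one might fear that a different finite set $\tilde{Y}$ is needed for each $f$, or that the maximizer $y^{\ast}$ --- which depends on both $f$ and $x$ --- cannot be handled by a single finite set. Both worries dissolve once one observes that the only $f$-dependent quantity, $f^{X}$, has a Lipschitz modulus bounded solely by that of $\Phi$; the $\delta$-net then controls all such $y^{\ast}$ simultaneously. The remaining estimates are routine, the only minor care being the strict-versus-non-strict inequality, which is obtained by taking $2L\delta<\epsilon$ strictly together with compactness of $X$ (or simply by shrinking $\delta$).
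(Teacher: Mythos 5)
Your proposal is correct and follows essentially the same route as the paper: take $g=(f^{X})^{\tilde Y}$ for a finite $\delta$-net $\tilde Y$ of $Y$, use the $f$-independent Lipschitz constant of $\Phi$ (inherited by $f^{X}$) to make the net uniform in $f$, and bound $f-g$ by comparing the supremum over $Y$ with that over $\tilde Y$. The only cosmetic difference is that you pass through an attained maximizer $y^{\ast}$ while the paper bounds the suprema directly, which changes nothing of substance.
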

\emph{Proof.} Deferred to Appendix (\ref{proof:uniform-approx}).

It would be convenient if finitely $Y$-convex functions were also $Y$-convex. This is not a given; for example, rationals are not irrationals yet they can be arbitrarily close to them. The following proposition takes care of that.
\begin{proposition} \label{finite-subset}
Finitely $Y$-convex functions are also $Y$-convex.
\begin{align*}
\mathcal{FC}^Y(X) \subseteq \mathcal{C}^Y(X)
\end{align*}
\end{proposition}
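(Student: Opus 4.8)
The plan is to obtain this inclusion as an immediate consequence of the Enlarging Lemma (Lemma~\ref{lem:extend-set}). First I would unpack the two definitions involved: a function $f$ lies in $\mathcal{FC}^Y(X)$ precisely when there is a \emph{finite} subset $\tilde{Y}\subseteq Y$ with $f\in\mathcal{C}^{\tilde{Y}}(X)$, i.e. $f$ is $\tilde{Y}$-convex and has domain $X$; whereas $\mathcal{C}^Y(X)$ is by definition the class of $Y$-convex functions with domain $X$. Since $\mathcal{FC}^Y(X)=\bigcup_{\tilde{Y}\subseteq Y,\ |\tilde{Y}|<\infty}\mathcal{C}^{\tilde{Y}}(X)$, it suffices to show $\mathcal{C}^{\tilde{Y}}(X)\subseteq\mathcal{C}^Y(X)$ for each finite $\tilde{Y}\subseteq Y$ and take the union.

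The single substantive step is then to note that $\tilde{Y}\subseteq Y$ and apply Lemma~\ref{lem:extend-set} with $\tilde{Y}_1=\tilde{Y}$ and $\tilde{Y}_2=Y$: a $\tilde{Y}$-convex function is $Y$-convex. Concretely, if $f=(g^{\tilde{Y}})_{|X}$ for some $g:\tilde{Y}\to\overline{\R}$, extend $g$ to $\bar g$ on all of $Y$ by setting $\bar g(y)=+\infty$ for $y\in Y\setminus\tilde{Y}$; then for such $y$ the term $\Phi(x,y)-\bar g(y)=-\infty$ does not affect the supremum defining the transform, so $\bar g^{Y}=g^{\tilde{Y}}$ pointwise on $X$, and hence $f=(\bar g^{Y})_{|X}$ is $Y$-convex. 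This exhibits $f\in\mathcal{C}^Y(X)$.

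The only point needing (minor) care is the bookkeeping about domains: both $\mathcal{FC}^Y(X)$ and $\mathcal{C}^Y(X)$ are defined to consist of functions whose domain is exactly $X$, and the construction above only modifies the conjugate $g$ on the $Y$-side while leaving the domain of the transform (namely $X$) untouched, so the domain condition is preserved. I do not anticipate any real obstacle here; the content of the statement is entirely carried by Lemma~\ref{lem:extend-set}, and if one wished to avoid citing it, the displayed extension $\bar g$ together with the observation that $+\infty$ values contribute $-\infty$ to the supremum is a self-contained argument.
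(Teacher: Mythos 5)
Your proof is correct and matches the paper's argument exactly: the paper proves this proposition simply as a special case of the Enlarging Lemma (Lemma~\ref{lem:extend-set}), which is precisely your route, with your extension of $g$ by $+\infty$ just reproducing that lemma's own proof.
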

\emph{Proof.} Deferred to Appendix (\ref{proof:finite-subset}).

Let $\overline{S}$ denote the topological closure of $S$. Combining Propositions~\ref{uniform-approx} and~\ref{finite-subset} we obtain:

\begin{theorem}[Density of finitely $Y$-convex functions] \label{fc-density}
The finitely $Y$-convex functions are dense in the space of $Y$-convex functions:
\begin{align*}
\overline{\mathcal{FC}^Y(X)} = \mathcal{C}^Y(X)
\end{align*}
Hence, our parameterization of $\mathcal{FC}^Y(X)$ is a universal approximator for $\mathcal{C}^Y(X)$.
\end{theorem}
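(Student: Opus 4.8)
The theorem follows almost immediately from the two preceding propositions, so the plan is essentially to assemble them. The statement $\overline{\mathcal{FC}^Y(X)} = \mathcal{C}^Y(X)$ is a set equality, so I would prove the two inclusions separately, using the sup-norm topology on the space of (Lipschitz, hence continuous) real-valued functions on the compact set $X$.

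First I would establish $\overline{\mathcal{FC}^Y(X)} \subseteq \mathcal{C}^Y(X)$. By Proposition 2 we already have $\mathcal{FC}^Y(X) \subseteq \mathcal{C}^Y(X)$, so it suffices to show that $\mathcal{C}^Y(X)$ is closed under uniform limits. This is the only point requiring a small argument beyond quoting the propositions: given $f_n \to f$ uniformly with each $f_n$ being $Y$-convex, I would show $f$ equals its generalized biconjugate $(f^{YX})_{|X}$ and invoke Lemma~\ref{lem:biconjugation}. The inequality $(f^{YX})_{|X} \le f$ is automatic from Lemma~\ref{lem:basic-properties}. For the reverse, I would use that the $Y$-transform and $X$-transform are each $1$-Lipschitz with respect to the sup norm (being pointwise suprema of quantities that shift by at most $\|f_n - f\|_\infty$ when $f$ is perturbed), so $f_n^{YX} \to f^{YX}$ uniformly; then $f = \lim f_n = \lim (f_n^{YX})_{|X} = (f^{YX})_{|X}$, giving $Y$-convexity of $f$. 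Alternatively, and perhaps more cleanly, I would note that for each $\epsilon$, $f_n$ lies within $\epsilon$ of some finitely $Y$-convex $g$ (Proposition~1 applied to $f_n$, or just directly), but the cleanest route is the biconjugation/closedness argument above.

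Second, $\mathcal{C}^Y(X) \subseteq \overline{\mathcal{FC}^Y(X)}$ is exactly the content of Proposition~1: given any $f \in \mathcal{C}^Y(X)$ and any $\epsilon > 0$, Proposition~1 produces $g \in \mathcal{C}^{\tilde Y}(X) \subseteq \mathcal{FC}^Y(X)$ with $|f-g|_\infty < \epsilon$, so $f$ is in the closure. Combining the two inclusions yields the claimed equality, and the final sentence about universal approximation is then just a restatement: since $p_{\tilde Y}$ surjects onto $\mathcal{C}^{\tilde Y}(X)$ and these exhaust $\mathcal{FC}^Y(X)$ as $\tilde Y$ ranges over finite subsets, the family of parameterizations $\{p_{\tilde Y}\}$ approximates every element of $\mathcal{C}^Y(X)$ arbitrarily well.

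The main obstacle, such as it is, is the closedness half: Proposition~2 only gives $\mathcal{FC}^Y(X) \subseteq \mathcal{C}^Y(X)$, not that this larger set is topologically closed, and a density statement of the form $\overline A = B$ genuinely needs both $A \subseteq B$ and $B$ closed (or at least $\overline A \supseteq B$ and $\overline A \subseteq B$). So I would make sure to spell out why $\mathcal{C}^Y(X)$ is sup-norm closed — via the $1$-Lipschitz continuity of the generalized transforms and Lemma~\ref{lem:biconjugation} — rather than treating it as obvious. Everything else is bookkeeping over the two propositions.
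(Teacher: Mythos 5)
Your proposal is correct and follows essentially the same route as the paper, which derives the theorem simply by combining Proposition~1 (every $Y$-convex function is uniformly $\epsilon$-close to some $\mathcal{C}^{\tilde Y}(X)$ function with $\tilde Y$ finite) with Proposition~2 ($\mathcal{FC}^Y(X) \subseteq \mathcal{C}^Y(X)$). The one place you go beyond the paper is the closedness half: the paper's one-line derivation only literally yields $\mathcal{C}^Y(X) \subseteq \overline{\mathcal{FC}^Y(X)} \subseteq \overline{\mathcal{C}^Y(X)}$, and the stated set equality does require knowing that $\mathcal{C}^Y(X)$ is closed under uniform limits; your argument via the $1$-Lipschitz continuity of the generalized transforms in the sup norm together with Lemma~\ref{lem:biconjugation} supplies exactly that missing step, and it is sound under the standing compactness and Lipschitz assumptions.
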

\emph{Proof.} Deferred to Appendix (\ref{proof:fc-density}).

This may not be enough, as sometimes we need to approximate the gradients of $Y$-convex functions. For example, in Section~\ref{sec:applications}, we will see that allocations depend on the gradient of the GCF indirect utility function.

The example of \eqref{eq:grad-counterexample} in Section~\ref{sec:related} demonstrated that uniform convergence of functions does not imply convergence of their gradients. The reason is that controlling values of functions is not enough to control their local slopes.
As shown in the literature (see \citet{chaudhari2024gradient}), this problem goes away when the functions and their limit are convex. For convex functions we have the following property:
\begin{equation}
    \label{eq:c-subgradient}
    \exists p \forall z:\; f(z)\ge f(x)+\langle z-x \mid p \rangle
\end{equation}
We call such $p$ subgradients of $f$ at $x$ and they coincide with gradients where $f$ is differentiable. Using this, for any $v$, we obtain:
\begin{equation}
    \label{eq:squeeze}
    \frac{f(x)-f(x-tv)}{t}\le \langle v \mid p \rangle \le \frac{f(x+tv)-f(x)}{t}.
\end{equation}
Under uniform convergence $f_n\to f$, these finite-difference bounds converge. Since the inner product is point-separating ($p \mapsto \langle \cdot \mid p \rangle$ is injective), controlling $\langle v \mid p \rangle$ for all $v$ controls $p$ itself, which in turn controls the gradients where $f$ is differentiable.

Moving from convexity to generalized convexity, we have the following generalization of \eqref{eq:c-subgradient}:
\begin{equation}
    \label{eq:gc-subgradient}
    \exists p \forall z:\; f(z)\ge f(x)+ \Phi(z,p) -\Phi(x,p)
\end{equation}
Such $p$ are called $\Phi$-subgradients of $f$ at $x$. There are two non-trivial obstacles to applying the same logic here: (1) $\Phi$ may fail to be point-separating in its second argument, and (2) in spite of their name, $\Phi$-subgradients do not coincide with gradients of $f$ and may not even be in a one-to-one correspondence with them.

Though we did not mention it then, the counterexample \eqref{eq:grad-counterexample} can be realized within generalized convexity. Take $X=Y=[-2\pi,2\pi]$ and the continuous kernel
\[
\Phi(x,y)=
\begin{cases}
y\sin(x/y), & y\neq 0,\\
0, & y=0.
\end{cases}
\]
For each $n$, let $\tilde Y_n=\{1/n\}$ and define $g_n:\tilde Y_n\to\R$ by $g_n(1/n)=0$. Then
$f_n(x)=g_n^{\tilde Y_n}(x)=\Phi(x,\frac{1}{n})= \frac{1}{n}\sin(nx)$. Since each $f_n$ is finitely $Y$-convex, it is also $Y$-convex by Proposition~\ref{finite-subset}. Similarly, letting $\tilde Y=\{0\}$ realizes $f\equiv 0$ as a $Y$-convex function.

So uniform convergence of GCFs does not imply convergence of their gradients. Intuitively, at least in this counterexample, the obstruction is unbounded curvature: $f_n''(x)=-n\sin(nx)$ has magnitude $n \to \infty$. This matters as curvature determines how quickly the gradients change. Hence we cannot have something analogous to \eqref{eq:squeeze}.

To recover a kernel-agnostic substitute for this convex mechanism, we identify an abstract condition ensuring a uniform lower curvature bound for all branches $x\mapsto \Phi(x,y)$, which then propagates through suprema and restores gradient stability. We formalize this via semiconvexity, defined next.

\begin{definition}[Semiconvexity] \label{def:semiconvex}
A function $f: X \to \overline{\R}$ is semiconvex if there exists a constant $K \ge 0$ such that $f + \frac{K}{2} \|x\|_2^2$ is convex. A family of functions is equi-semiconvex if they are all semiconvex with the same constant $K$.
\end{definition}
\begin{remark}
In parts of the optimization literature, this definition is also referred to as $K$-weak convexity. We use the term ``semiconvexity'' because it is more common in the generalized-convexity and optimal-transport literature.
\end{remark}
\begin{remark}
If $f$ is twice differentiable on $X$, then $K$-semiconvexity is equivalent to
$\nabla^2 f(x)\succeq -K I$ for all $x$ (i.e., every Hessian eigenvalue is $\ge -K$).
\end{remark}

See \citet{cannarsa2004semiconcave} for a thorough introduction.

This is a much weaker condition than convexity since sufficiently smooth functions are semiconvex on a compact domain. Intuitively, semiconvexity only requires the absence of downward kinks, since any finite negative curvature can be compensated by adding a sufficiently large quadratic term.

Going back to our counterexample \eqref{eq:grad-counterexample}, notice that $f_n$s are $n$-semiconvex while $f$ is $0$-semiconvex, so semiconvexity alone is not sufficient. As it turns out, semiconvexity would be enough if $f_n$s and $f$ shared the same semiconvexity constant, which we call equi-semiconvexity.

\begin{proposition}[Stability of gradients under semiconvex convergence] \label{prop:semiconvex-stability}
If $f_n \to f$ uniformly and all $f_n$ and $f$ are equi-semiconvex, then $\nabla f_n \to \nabla f$ uniformly where the gradients exist.
\end{proposition}

\emph{Proof.} Deferred to Appendix (\ref{proof:semiconvex-stability}).

This result allows us to pass from uniform approximation of functions to uniform approximation of their gradients within an equi-semiconvex family, and it is the key bridge between function-level and gradient-level universal approximation in our setting.

Hence, the natural question is: Are $Y$-convex functions equi-semiconvex?

\begin{proposition}[Preservation of semiconvexity under $\tilde{Y}$-transform] \label{semiconvex-preserve}
If the functions $\Phi(\cdot,y)$ are equi-semiconvex, then every $\tilde{Y}$-convex function is semiconvex with the same constant (and thus $\mathcal{C}^{\tilde{Y}}(X)$ is an equi-semiconvex family).
\end{proposition}
\emph{Proof.} Deferred to Appendix (\ref{proof:semiconvex-preserve}).

% \begin{remark}
% For convex functions, gradient stability ultimately comes from the “slope squeeze’’ provided by global affine supports: function values control admissible slopes. This relies on the fact that in the inner-product case $\Phi(x,y)=\langle x \mid y \rangle$ the branches $x\mapsto \Phi(x,y)$ have zero curvature (they are affine). For a general kernel, the supporting branches $x\mapsto \Phi(x,y)-g(y)$ can have $y$-dependent negative curvature, and uniform convergence of $Y$-convex functions need not control their gradients (cf. the counterexample above). Proposition~\ref{semiconvex-preserve} identifies the replacement structure: a \emph{uniform} lower-curvature bound on the kernel sections $x\mapsto \Phi(x,y)$ (encoded by a single semiconvexity constant $K$) propagates through the $\tilde Y$-transform.
% \end{remark}

\begin{remark}
A sufficient condition is that $\Phi$ is twice continuously differentiable since by compactness of $X\times Y$, we can lower bound the smallest eigenvalue of $\nabla_x^2 \Phi(x,y)$ uniformly.
\end{remark}

In many applications, boundedness/compactness of $X$ and $Y$ is natural (or can be imposed without affecting the modeling goal, e.g., by normalization or truncation). Moreover, in applications where one aims to recover decision rules from a GCF via twist/envelope-type formulas (as in Section~\ref{sec:applications}), one typically assumes enough regularity of the kernel (often $C^2$ on the relevant compact domain) so that these expressions are well-defined. Under these common compactness and smoothness assumptions, the equi-semiconvexity condition required for our gradient-approximation results is satisfied by the preceding remark.

\begin{theorem}[Universal approximation for gradients] \label{nablafc-density}
If the sections $x\mapsto \Phi(x,y)$ are equi-semiconvex, then $\nabla \mathcal{FC}^Y(X) = \{\nabla f : f \in \mathcal{FC}^Y(X)\}$ is dense in $\nabla \mathcal{C}^Y(X) = \{ \nabla f : f \in \mathcal{C}^Y(X)\}$:
$$ \overline{\nabla \mathcal{FC}^Y(X)} = \nabla \mathcal{C}^Y(X) $$
In other words, $\nabla \mathcal{FC}^Y(X)$ are universal approximators for $\nabla \mathcal{C}^Y(X)$.
\end{theorem}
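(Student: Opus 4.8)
The plan is to bootstrap the theorem from the function-level density result (Theorem 1) using the gradient-stability machinery just established (Propositions 3 and 4). The fact that makes this work is that the semiconvexity constant $K$ of $\Phi$ is inherited \emph{uniformly}: applying Proposition 4 both to finite $\tilde{Y}$ and to $\tilde{Y}=Y$ itself (a legitimate choice, since a supremum of convex functions is convex regardless of how many there are), every function in $\mathcal{FC}^Y(X)$ and every function in $\mathcal{C}^Y(X)$ is semiconvex with the same constant $K$. This is exactly the hypothesis needed to apply Proposition 3 along \emph{any} approximating sequence.

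To get the inclusion $\nabla \mathcal{C}^Y(X) \subseteq \overline{\nabla \mathcal{FC}^Y(X)}$, I would fix $f \in \mathcal{C}^Y(X)$, invoke Theorem 1 to obtain a sequence $f_n \in \mathcal{FC}^Y(X)$ with $f_n \to f$ uniformly, and note that all of the $f_n$ and $f$ are semiconvex with the common constant $K$. Proposition 3 then gives $\nabla f_n \to \nabla f$ uniformly wherever the gradients exist, so $\nabla f$ lies in the closure of $\nabla \mathcal{FC}^Y(X)$.

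For the reverse inclusion I would first note that $\nabla \mathcal{FC}^Y(X) \subseteq \nabla \mathcal{C}^Y(X)$ by Proposition 2, so it suffices to show $\nabla \mathcal{C}^Y(X)$ is closed. Take $g_n = \nabla f_n$ with $f_n \in \mathcal{FC}^Y(X)$ and $g_n \to g$. Since adding a constant keeps a function in $\mathcal{C}^{\tilde{Y}}(X)$ (indeed $h^{\tilde{Y}} + c = (h-c)^{\tilde{Y}}$) and does not change its gradient, we may normalize $f_n(x_0) = 0$ for a fixed $x_0 \in X$ without leaving $\mathcal{FC}^Y(X)$ or altering $g_n$. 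Because every $Y$-convex function is Lipschitz with a constant controlled only by $\Phi$, and $X$ is compact, the normalized family $\{f_n\}$ is uniformly bounded and equicontinuous; by Arzel\`a--Ascoli a subsequence $f_{n_k}$ converges uniformly to some $f$, which lies in $\overline{\mathcal{FC}^Y(X)} = \mathcal{C}^Y(X)$ by Theorem 1. Applying Proposition 3 once more, $\nabla f_{n_k} \to \nabla f$ wherever the gradients exist; since also $\nabla f_{n_k} = g_{n_k} \to g$, we conclude $g = \nabla f \in \nabla \mathcal{C}^Y(X)$.

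The main obstacle will not be the logical skeleton above but making precise the ambient topology on gradients: by Rademacher's theorem the gradients of these Lipschitz functions exist only almost everywhere, so the statements ``$\nabla f_n \to \nabla f$ where the gradients exist'' and ``$\nabla \mathcal{C}^Y(X)$ is closed'' must be interpreted with respect to a fixed mode of convergence for a.e.-defined vector fields (for instance uniform convergence on the common domain of differentiability, or convergence in $L^p$ or in measure), and one must check that the uniform limit produced by Arzel\`a--Ascoli is compatible with that mode and that the exceptional null sets can be handled coherently along the whole sequence. Everything else is a routine assembly of Theorem 1 and Propositions 2--4.
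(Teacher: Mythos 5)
Your proposal is correct and follows essentially the route the paper intends: the paper gives no explicit proof of this theorem beyond ``Using this, we have,'' meaning it is to be assembled from Theorem 1 (uniform density of $\mathcal{FC}^Y(X)$), Proposition 4 applied with both finite $\tilde{Y}$ and $\tilde{Y}=Y$ (a common semiconvexity constant $K$), and Proposition 3 (uniform convergence of gradients), which is precisely your forward inclusion. Your additional Arzel\`a--Ascoli argument for the reverse inclusion and your remark that a mode of convergence for the a.e.-defined gradients must be fixed go beyond the paper's terse treatment and are what one would actually need to justify the literal set equality $\overline{\nabla\mathcal{FC}^Y(X)}=\nabla\mathcal{C}^Y(X)$ rather than density alone.
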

\emph{Proof.} Deferred to Appendix (\ref{proof:nablafc-density}).

% Additionally, we also show that this equi-semiconvexity condition is almost necessary for the gradients to be approximable.

% \begin{definition}[Nondegeneracy of kernel(ND)]
% A kernel $\Phi$ is nondegenerate if at no $x \in X$ all sections $\Phi(\cdot,y)$ are locally equal up to an additive constant. That is, $\forall x \in X$ and $\forall y_1 \in Y$, there exists $y_2 \in Y$ such that $\Phi(.,y_2) - \Phi(.,y_1)$ is not locally constant around $x$.
% \end{definition}

% \begin{theorem}[Necessity of semiconvexity for gradient approximation] \label{necessity-semiconvexity}
% If $\Phi$ is nondegenerate and not equi-semiconvex in $x$, then $\overline{\nabla \mathcal{FC}^Y(X)} \neq \nabla \mathcal{C}^Y(X)$.
% \end{theorem}

% Hence, we can approximate both $Y$-convex functions and their gradients with finitely $Y$-convex functions and their gradients, generalizing the classic result that convex functions and their gradients can be approximated by max-affine functions and their derivatives.

Since finitely $Y$-convex functions are defined as a finite maximum, they are not smooth. In certain applications, we may prefer to work with smoothed versions. In the standard convex setting, some recent works replace the maximum with the log-sum-exp function $\operatorname{LSE}^{\tau}$:
\[
\operatorname{LSE}^{\tau}(x_1,\dots,x_n)=\frac{1}{\tau}\ln\!\left(\sum_{i=1}^n e^{\tau x_i}\right).
\]
To define a smoothed version of the $\tilde{Y}$-transform, we replace the maximum with $\operatorname{LSE}^{\tau}$ and call it the $\tilde{Y}^{\tau}$-transform. For a function $g:\tilde{Y}\to\R$, define
\[
g^{\tilde{Y}^{\tau}}(x)=\frac{1}{\tau}\ln\!\left(\sum_{y\in\tilde{Y}} \exp\big(\tau(\Phi(x,y)-g(y))\big)\right).
\]
Similarly, define $\mathcal{FC}^{Y^{\tau}}(X)$ and $\nabla \mathcal{FC}^{Y^{\tau}}(X)$ by replacing the $\tilde{Y}$-transform with the $\tilde{Y}^{\tau}$-transform.

\begin{theorem}[Smooth approximation] \label{smooth-density}
$\bigcup_{\tau \in \mathbb{N}} \mathcal{FC}^{Y^\tau}(X)$ uniformly approximates $\mathcal{C}^Y(X)$. If the kernel sections $x\mapsto \Phi(x,y)$ are equi-semiconvex, then $\bigcup_{\tau \in \mathbb{N}} \nabla \mathcal{FC}^{Y^\tau}(X)$ pointwise approximates $\nabla \mathcal{C}^Y(X)$ where gradients exist.
\end{theorem}
\emph{Proof.} Deferred to Appendix (\ref{proof:smooth-density}).
\begin{remark}
    Theorem~\ref{smooth-density} justifies replacing the hard $\max$ with log-sum-exp to obtain smooth models while retaining some approximation guarantees (and, under equi-semiconvexity, gradient approximation where gradients exist).
\end{remark}

\subsection{Maxout Analogy}\label{subsection:nn}

This subsection provides intuition on finitely $Y$-convex models by relating their structure to maxout-type architectures.

Our finitely $Y$-convex parameterization takes the form
\[
x \mapsto \max_{i=1,\dots,n}\{\Phi(x,y^i) - p_i\},
\]
that is, it aggregates a collection of branch functions by $\max$. In the inner-product case $\Phi(x,y)=\langle x,y\rangle$, the branches are affine in $x$, and the model reduces to a maxout layer. Concretely, a maxout unit computes the maximum of finitely many affine functions, e.g.
\[
x \mapsto \max_{j=1,\dots,k}\{\langle w_j  \mid x \rangle - b_j\}.
\]
For general $\Phi$, the branches $\Phi(\cdot,y^i)-p_i$ are kernel sections and need not be affine. Figure~\ref{fig:neural-ot} in the appendix illustrates this.

This viewpoint suggests exploring deeper compositions of finitely $Y$-convex modules as a complementary direction to increasing the number of support points $n$, but we do not pursue this here.

\section{Applications of Generalized Convexity}\label{sec:applications}
We will show how certain optimal transport and auction design problems can be framed as finding the right generalized convex function.
\subsection{Optimal Transport}\label{sec:ot}

An optimal transport problem concerns relating a distribution of mass $\mu \in \Delta(X)$ on one space to a distribution of mass $\eta \in \Delta(Y)$ on another space in a cost-minimizing way.

Since minimizing a cost is equivalent to maximizing its negation, we shall work with the surplus kernel $\Phi(x, y) = -c(x, y)$ and frame the problem as maximization.
\begin{align}
    \sup_{\substack{\pi \in \Pi(\mu, \eta)}} \quad
    & \mathbb{E}_\pi[\Phi(x, y)] \label{eq:primal} \\
    \inf_{\substack{f: X \to \mathbb{R},\;g: Y \to \mathbb{R}}} \label{eq:dual}\quad
    & \mathbb{E}_\mu[f(x)] + \mathbb{E}_\eta[g(y)] \\
\text{s.t.}\quad \forall (x,y): &\quad f(x) + g(y) \geq \Phi(x, y) \notag
\end{align}
\Eqref{eq:primal} states the Kantorovich problem: finding a transportation plan $\pi \in \Pi(\mu, \eta)$, where a coupling $\Pi(\mu, \eta)$ is the set of all joint distributions on $X \times Y$ with marginals $\mu$ and $\eta$. Since this is linear, it also admits the dual in \Eqref{eq:dual}, where $f, g$ are called the Kantorovich potentials.

For a feasible $(f,g)$ pair, $(g^Y, g)$ is also feasible (see Theorem~\ref{thm:basic-properties}) and does not increase the dual objective. Doing it one more time yields $(g^Y,\left(g^Y\right)^X)$, hence it is WLOG to write the dual as an optimization over GCFs:
\begin{align}
\inf_{f \in \mathcal{C}^Y(X)} \; \mathbb{E}_\mu[f(x)] + \mathbb{E}_\eta[f^X(y)]  \label{eq:ot_final}
\end{align}
Another key result from the literature is that
\begin{align*}
\pi(x, y) > 0 \implies \nabla f(x) = \nabla_x \Phi(x, y)
\end{align*}
When $\nabla_x \Phi(x, \cdot)$ is a diffeomorphism (which in particular satisfies the twist condition\footnote{Twist means that for each fixed $x$, the map $y\mapsto \nabla_x\Phi(x,y)$ is injective, so $(\nabla_x\Phi(x,\cdot))^{-1}$ is well-defined on its range.}), we can invert it to get the optimal transportation map $T: X \to Y$:
\begin{align}
T(x) = (\nabla_x \Phi(x, \cdot))^{-1} (\nabla f(x)) \label{eq:ot_monge}
\end{align}
This is a generalization of \citep{brenier1991polar}, which states that for $c(x, y) = \|x-y\|_2^2$ the optimal transport map is the gradient of a convex function.

\subsection{Mechanism Design}\label{sec:mech}
Let there be outcomes $Y$, each priced according to a pricing function $t: Y \to \R$. A buyer has a type $x \in X$ unknown to the seller affecting how much they value each outcome. Buyer's utility $u(x,y) = \Phi(x, y) - t(y)$ is the value $\Phi(x,y)$ they get from the outcome $y$, given their type $x$, minus their payment $t(y)$. Under the compactness assumption in Section~\ref{sec:method}, we can write the buyer's choice problem as:
\[Q(t,x) = \argmax_{y \in Y} u(x,y) = \argmax_{y \in Y} { \Phi(x, y) - t(y) }\]
The seller, on the other hand, receives a revenue $t(y)$ minus some production cost $P(y)$ with $P:Y \to \R$ where $y$ should be the outcome chosen by the buyer. Hence the objective of the seller is:
\begin{align*}
\sup_{t: Y \to \R} \quad & \mathbb{E}_X(t(Q(t,x)) - P(Q(t,x))) \\
\text{s.t.}\quad & Q(t,x) = \argmax_{y \in Y} u(x,y)
\end{align*}
In other words, the seller needs to find a price function that gives them high profit given that the buyer is making the optimal choice $Q(t,x)$. From an ML perspective, this is exactly an adversarial training or min--max setup: the seller chooses $t$ while anticipating the buyer's best response $Q(t,x)$. As mentioned in Section~\ref{sec:intro}, this bilevel formulation is what many applied papers use. However, we will show that using GCFs, we can reduce this to a much simpler single-level optimization problem. To do that, we need to first define the indirect utility function, that is the utility the buyer receives from making their best choice.
\[v(x) =  \max_{y \in Y} u(x,y) = \max_{y \in Y} { \Phi(x, y) - t(y) }\]
It is easy to see that $v$ is a $Y$-convex function, i.e., $v \in \mathcal{C}^Y(X)$. We will show how we can write the bilevel problem as a single-level problem in terms of $v$. For that, we need the revelation principle from mechanism design \citep{myerson1981optimal}. Simply put, it states that for any mechanisms with prices $t$ and choices $Q$, there is a simpler equivalent mechanism that removes the optimization problem on the buyer's end.

Given prices $t$ and choices $Q$, we can ask the buyer directly for their types and simulate their optimal choice $Q(t,x)$ for them. This simplifies the buyer's side as the buyer has no incentive to report anything but their true type. Hence, we can convert the buyer's strategic optimization into a constraint on the seller's side. These mechanisms are called direct revelation incentive compatible (DRIC) mechanisms. Direct revelation means the buyer is only asked for their type and incentive compatible (IC) means their optimal choice is to reveal its true value. The distinction may seem superficial at first but it will allow us to rewrite the problem.

As discussed, it is WLOG to work with DRIC mechanisms so we focus on them. A DRIC mechanism consists of two objects: a payment function $p: X \to \R$ and an allocation function $a: X \to Y$ deciding what outcome to assign to each reported type. We will show that given a GCF indirect utility $v$, (IC) pins down both the payment $p$ and allocation $a$.

In the finitely $Y$-convex model, we can recover an allocation rule by selecting an active support point: pick $i^*(x)\in\arg\max_i\{\Phi(x,y^i)-p_i\}$ and set $a(x):=y^{i^*(x)}$.

When $v$ is differentiable, we can apply the envelope theorem \cite{rochet1987necessary}
\footnote{Intuitively, when the maximizer $y^*(x)$ is unique and in the interior (so the first-order condition in $y$ applies), differentiating through the max gives $\nabla F(x)=\nabla_x f(x,y^*(x))$.}
to the indirect utility $v$ to get:
\[\nabla v(x) = \nabla_x \Phi(x, a(x))\]
Again when $\nabla_x \Phi(x, \cdot)$ is invertible (the twist condition mentioned before), we have $a(x) = (\nabla_x \Phi(x, \cdot))^{-1} (\nabla v(x))$, so the allocation is pinned down by the gradient of the indirect utility $v$.
To simplify the notation, define $W(v, x) = (\nabla_x \Phi(x, \cdot))^{-1} (\nabla v(x))$ so $a(x) = W(v, x)$.
We can also write the payment in terms of the indirect utility from its definition:
\[p(x) = \Phi(x, a(x)) - v(x) = \Phi(x, W(v, x)) - v(x)\]
Thus, both the payment and the allocation are determined by the indirect utility function $v$, and we can write our bilevel problem as that of choosing the right GCF, the indirect utility function $v$:
\begin{equation}\label{eq:mech_final}
\max_{0 \leq v \in \mathcal{C}^Y(X)} \quad  \mathbb{E}_X[\Phi(x, W(v, x)) - v(x) - P(W(v, x))]
\end{equation}
The constraint $0 \leq v$ is there to make sure the buyer would want to participate in the auction. More concretely, it means $\forall x: 0 \leq v(x)$, otherwise the buyer with type $x$ would not want to participate in the auction. See \citep{ekeland2010notes} for a more detailed discussion.

\section{Experiments}\label{sec:experiments}
These experiments should be read as structured nonlinear optimization benchmarks rather than standard supervised learning tasks. In optimal transport, one can apply OT to many data modalities (including images), but this requires choosing a cost/kernel $\Phi$ that encodes the application semantics; in most such datasets there is no canonical $\Phi$, and different choices lead to different problems. Since our goal is to study learning potentials/maps under an explicit, user-specified $\Phi$ (including non-quadratic/general costs), we avoid conflating kernel selection with the contribution here. In mechanism design, the situation is even starker: there is no standard suite of public “real-world” benchmark datasets with agreed valuation models/kernels and ground-truth optima. We therefore focus on controlled benchmarks where $\Phi$ is explicit and where either partial baselines exist (Setting A) or meaningful sanity/qualitative checks are available (OT, Setting B). Importantly, we include non-inner-product kernels in both domains (general-cost OT potentials and a nonlinear auction surplus in Setting B).

The code for the experiments are available as part of the \href{https://github.com/MoeenNehzati/gconvex}{\texttt{gconvex}} package that implements finitely convex functions and optimizes them using PyTorch.

\subsection{Implementation Details}\label{sec:experiments-implementation}
In both applications, the learned object is a GCF: in optimal transport, it is the Kantorovich dual potential $f$; in mechanism design, it is the indirect utility $v$ (Section~\ref{sec:applications}). In both cases, we parameterize the GCF as a finitely $Y$-convex model (i.e., a $\tilde{Y}$-transform with $\tilde{Y}=\{y^1,\dots,y^n\}\subseteq Y$):
\[
x \mapsto \max_{i=1,\dots,n}\{\Phi(x,y^i)-p_i\},
\]
with trainable support points $\{y^i\}_{i=1}^n \subseteq Y$ and intercepts $\{p_i\}_{i=1}^n$. There is no separate choice of approximation layer beyond this parameterization: once the kernel $\Phi$ is fixed, the main approximation knob is the number $n$ of support points.

Training objectives are those in Section~\ref{sec:applications}, with expectations approximated by finite-sample averages, and optimized with first-order methods (Adam/AdamW). When constraints are present, we enforce them through penalty terms. To ease the optimization, we replace the hardmax in the objective with a softmax relaxation and ramp up the temperature over the course of training to approach the hardmax. When evaluating, we use the hard-max (and we observed that using a softmax at evaluation yields nearly identical results once the temperature is sufficiently high).

We recover downstream objects from the learned GCF using the characterizations in Section~\ref{sec:applications}. In OT, our smooth costs satisfy the twist condition, hence we recover the Monge map via the twist-based formula \eqref{eq:ot_monge}. In mechanism design, we recover the allocation rule by an argmax over the learned support points $\{y^i\}$; this does not rely on twist and is essential in Setting~B, whose hinge surplus does not satisfy twist. We then set payments via $p(x)=\Phi(x,a(x))-v(x)$ (Section~\ref{sec:mech}).

\subsection{Optimal Transport}
Looking back at \Eqref{eq:ot_final}, with our approach solving the Kantorovich problem is contingent on $f^X$ being easy to compute. For example, in the $1$-Wasserstein case (metric cost), Kantorovich--Rubinstein duality avoids explicit conjugation by reducing the dual to a single $1$-Lipschitz potential (one may take $g=-f$). Alternatively, if the marginals are product measures and the cost is additively separable, tensorization converts the $n$-dimensional conjugation to $n$ one-dimensional ones, almost trivial to solve (see \citet{villani2008optimal} for more information on both). Here, we focus on the latter case. We use a mixture of measures and consider two different costs: the quadratic cost $\|x-y\|_2^2$ and its negative $-\|x-y\|_2^2$. Though these look similar, they lead to very different results as the former prefers to transport by minimal displacement while the latter prefers to transport by maximal displacement. In particular, computing optimal transport with non-convex costs such as $-\|x-y\|_2^2$ is beyond the abilities of most solvers.
On compact domains these costs are smooth, hence $\nabla_x^2\Phi(x,y)$ admits a uniform lower bound, so the kernel sections are equi-semiconvex.

\begin{figure*}[!tb]
    \centering
    \includegraphics[width=1.\linewidth]{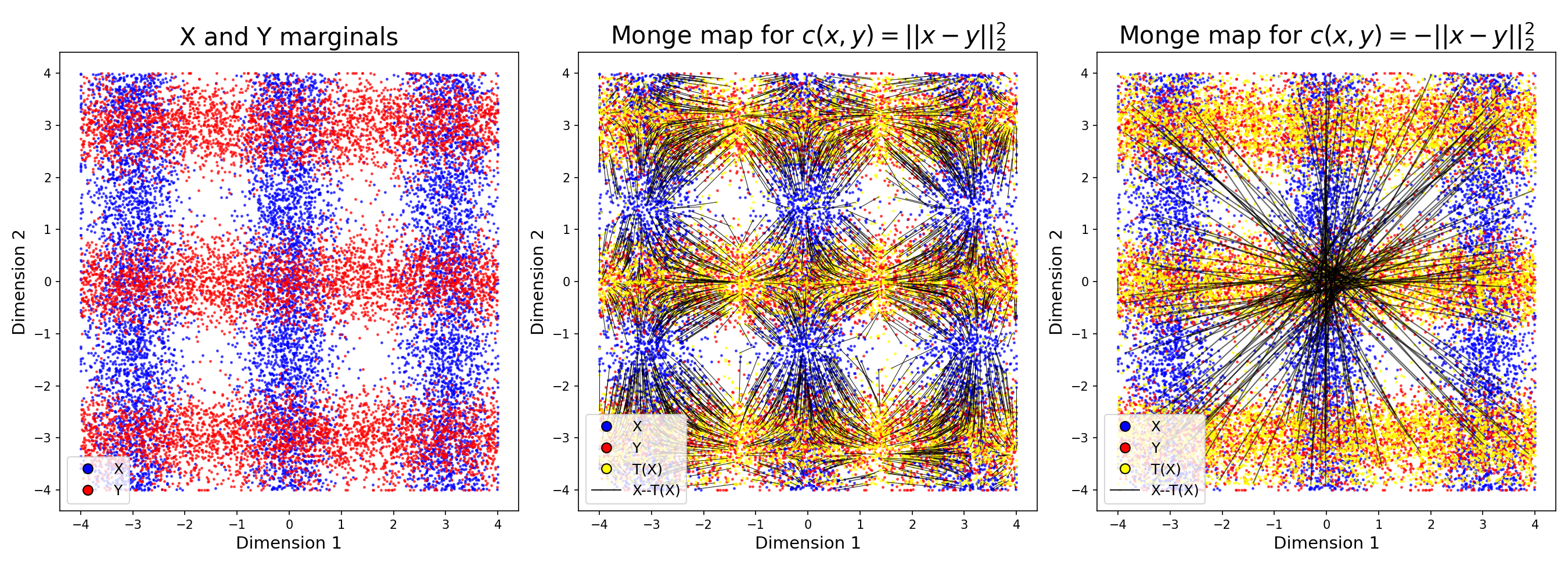}
    \caption{Visualization of the transport maps. The left plot shows the samples from the marginals. The middle and right plot show the underlying marginals and $T(X)$ where $T$ is the optimal learned transport map for $\|x-y\|_2^2$ and $-\|x-y\|_2^2$ respectively. The black lines connect $X$ and $T(X)$. As expected, the lines in the middle plot are short while lines in the right plot are as long as possible.}
    \label{fig:monge_map}
\end{figure*}

Figure~\ref{fig:monge_map} visualizes the results. It's easy to see that the marginals are well matched. Additionally, with the quadratic cost, the transport map is not moving the mass too far while with the negative quadratic cost, the transport map is moving the mass as far as possible. This is in line with what the costs are incentivizing.

\subsection{Auction Design}
For auction design experiments, we will use the formulation in \Eqref{eq:mech_final} to express the seller's profit in terms of the GCF indirect utility $v$. To pin down a problem we need to specify 5 things: (1) outcome space $Y$, (2) type space $X$, (3) surplus kernel $\Phi$, (4) the production cost $P$ and lastly (5) the distribution of types. The problem with this and similar exercises is that there are very few known baselines. Settings where the optimal auction is known are limited and there are not many numerical benchmarks to compare against for more than two goods. Hence we will consider two settings: (A) a simpler setting where we have some idea about what the optimal auction should look like and (B) a more complex setting where theory can't buy us much but there are sanity checks the results should satisfy.
Setting~A uses the classical inner-product surplus and therefore reduces to the standard convex-analytic setting (rather than a genuinely generalized convex one); we include it because it is the regime with the strongest available baselines. Note that even in this classical case, the training problem over finitely many support points and intercepts is nonconvex. Setting~B (and the nonstandard OT costs above) illustrates the non-inner-product setting.

Before moving on, we note that computational mechanism design becomes challenging quickly as the number of goods grows. Even for the classical additive/inner-product surplus (Setting A), recent learned-auction benchmarks typically consider on the order of $10$--$15$ goods (some allow for a handful of buyers rather than one) \citep{dutting2019optimal,curry2022learning,ivanov2022optimal,duan2023scalable,curry2022differentiable}. In contrast, in Setting A we report results for a single buyer with up to $250$ goods. Additionally, we study a more complex Setting B with a non-inner-product kernel and report results up to $20$ goods.

\subsubsection{Setting A}
We set $X=Y=[0,1]^n$, $\Phi(x,y) = \langle x,y\rangle$, $P(y)=0$. We can interpret this as follows. There is a single buyer and $n$ goods for sale. The types $x \in X$ denote how much the buyer values each good. The outcomes $y \in Y$ denote the probability of receiving each good. The kernel $\Phi(x,y)$ gives the expected value a buyer of type $x$ receives from outcome $y$. There are no production costs so the seller is not incurring any loss by transferring the good. Even though this setting may seem simple, it's hard to approach theoretically. Almost all the theory is concerned with the types being uniformly distributed in $X=[0,1]^n$ so we take the distribution of types to be uniform as well. Here the kernel sections are affine in each variable, hence $0$-semiconvex and equi-semiconvex.

We know that as $n$ grows, the seller can do better by bundling some of the goods together instead of selling them separately. However, we do not know how exactly this bundling should be done and what the optimal profit is. The Straight-Jacket auction (SJa) is known to be optimal for $n \leq 6$ \citep{giannakopoulos2014duality} and conjectured to be optimal for $6<n$. Even computing the revenue of this auction is complicated. \citet{joswig2022generalized} provides its exact revenue for up to $n \leq 12$, so we compare against that where available. Tables~\ref{tab:mech1_small}--\ref{tab:mech1_large} summarize our main results: Table~\ref{tab:mech1_small} reports SJa for small $n$, while Table~\ref{tab:mech1_large} focuses on larger $n$ where no exact SJa benchmark is available and we report only analytical baselines and our learned mechanism. We are able to closely match SJa's revenue where available, providing a quantitative check. In addition, we observe two sanity checks. The seller can always attain profit per item of $0.25$ by selling each good separately at price $0.5$. As the number of items grows, the seller should be able to do better via bundling of goods. The profit per item can never exceed $0.5$ as in that case the buyer would be better off not participating in the auction. Our findings are consistent with these expectations as the profit per good increases with the number of goods while staying within the bounds of $[0.25,0.5]$.

Figure~\ref{fig:2d} from appendix visualizes the learned allocations for the two-item case. Similar to SJa, the learned mechanism is neither pure bundling nor separate selling. Instead, it's a combination of both selling goods separately and together.

% Split the wide table into two one-column tables to avoid `table*` float placement issues.
\begin{table}[t]
\centering
\caption{Mean profit per good in Setting (A) (small $n$). SJa revenue is reported when available.}
\label{tab:mech1_small}
{\scriptsize
\setlength{\tabcolsep}{3pt}
\renewcommand{\arraystretch}{0.9}
\begin{tabular}{@{}lccccc@{}}
\toprule
Profit per Good
& $n=1$
& $n=2$
& $n=4$
& $n=6$
& $n=12$\\
\midrule
Separate Posted Pricing (analytical)
    & 0.250
    & 0.250
    & 0.250
    & 0.250
    & 0.250 \\
\midrule
SJA
    & 0.250
    & 0.274
    & 0.305
    & 0.324
    & 0.361 \\
\midrule
Learned Mechanism
    & 0.249
    & 0.274
    & 0.303
    & 0.321
    & 0.353 \\
\bottomrule
\end{tabular}}
\end{table}

\begin{table}[t]
\centering
\caption{Mean profit per good in Setting (A) (large $n$).}
\label{tab:mech1_large}
{\scriptsize
\setlength{\tabcolsep}{3pt}
\renewcommand{\arraystretch}{0.9}
\begin{tabular}{@{}lccc@{}}
\toprule
Profit per Good
& $n=50$
& $n=100$
& $n=250$\\
\midrule
Separate Posted Pricing (analytical)
    & 0.250
    & 0.250
    & 0.250 \\
\midrule
Learned Mechanism
    & 0.412
    & 0.423
    & 0.450 \\
\bottomrule
\end{tabular}}
\end{table}
\subsubsection{Setting B}
Let $X=Y=[0,T]^n$ and $\Phi(x,y)=\sum_{i=1}^n \max(y_i-x_i,0)$. We assume types $x$ are i.i.d.\ log-normal with parameters $(0,0.25)$ truncated to $[0,T]^n$. We take $T$ sufficiently large so that for the finite sample sizes used in our experiments, the truncation does not bind in practice. We set production cost $P(y)=0.1\|y\|_2^2$. One interpretation is that $Y$ is the quantity of each good produced and sold while $X$ is the need of the buyer for each good. The surplus kernel $\Phi(x,y)$ captures how well the produced quantity $y$ meets the need $x$ of the buyer, giving the buyer no value if the need is not met and more if it is exceeded. Here the kernel sections are convex in each variable, hence $0$-semiconvex and equi-semiconvex.

Table~\ref{tab:mech2} summarizes our results. The surplus is separable across goods (so it does not model complementarities), but it is nonlinear and falls outside the inner-product case. Since there are no known optimal mechanisms or strong numerical baselines here, we focus on sanity checks: as the number of goods increases, the profit per good increases (bundling benefits), and the profit per good remains non-negative and bounded above by the expected value of a single good under the given distribution.

% \subsection{Qualitative analysis}
% Figure~\ref{fig:1d} visualizes the learned single-item mechanism; Figure~\ref{fig:2d} shows the two-item case. In two items, the learned allocation is neither pure bundling nor separate selling; consistent with theory and SJa, it combines pricing across bundles and exhibits bunching.
\begin{table}[t]
\centering
\caption{Mean profit per good in Setting (B).}
\label{tab:mech2}
{\scriptsize
\setlength{\tabcolsep}{4pt}
\renewcommand{\arraystretch}{0.9}
\begin{tabular}{@{}lcccc@{}}
\toprule
Profit per Good
& $n=1$
& $n=5$
& $n=10$
& $n=20$\\
\midrule
Learned Mechanism
    & 1.022
    & 1.202
    & 1.250
    & 1.253 \\
\bottomrule
\end{tabular}}
\end{table}

\section{Conclusion}\label{sec:conclusion}
We developed a framework for parameterizing generalized convex functions and their gradients with a convex parameter space. Finitely $Y$-convex functions form a dense subset of all $Y$-convex functions, yielding universal approximators for both generalized convex functions and their gradients under mild regularity conditions. These parameterizations admit a neural-network interpretation via shallow architectures with max aggregation, suggesting deeper analogues.

On the applied side, our methods are implemented in the \href{https://github.com/MoeenNehzati/gconvex}{\texttt{gconvex}} package and used to learn optimal transport maps with general costs and revenue-maximizing auctions with general valuation kernels, with results consistent with theory. This provides a foundation for further work in mathematical economics, optimal transport, and bilevel ML, including designing deeper finitely $Y$-convex architectures, understanding when local optimization finds global optima, developing quantitative approximation rates in the number of support points beyond density guarantees, and analyzing convergence behavior of the resulting nonconvex training dynamics.

\section*{Acknowledgements}
I thank Alfred Galichon and Pegah Alipoormolabashi for helpful discussions and comments.

\bibliography{refs}

@article{hornik1989multilayer,
  title={Multilayer feedforward networks are universal approximators},
  author={Hornik, Kurt and Stinchcombe, Maxwell and White, Halbert},
  journal={Neural networks},
  volume={2},
  number={5},
  pages={359--366},
  year={1989},
  publisher={Elsevier}
}

@article{pinkus1999approximation,
  title={Approximation theory of the MLP model in neural networks},
  author={Pinkus, Allan},
  journal={Acta numerica},
  volume={8},
  pages={143--195},
  year={1999},
  publisher={Cambridge University Press}
}

@article{yarotsky2022universal,
  title={Universal approximations of invariant maps by neural networks},
  author={Yarotsky, Dmitry},
  journal={Constructive Approximation},
  volume={55},
  number={1},
  pages={407--474},
  year={2022},
  publisher={Springer}
}

@article{liang2016deep,
  title={Why deep neural networks for function approximation?},
  author={Liang, Shiyu and Srikant, Rayadurgam},
  journal={arXiv preprint arXiv:1610.04161},
  year={2016}
}

@article{lu2021deep,
  title={Deep network approximation for smooth functions},
  author={Lu, Jianfeng and Shen, Zuowei and Yang, Haizhao and Zhang, Shijun},
  journal={SIAM Journal on Mathematical Analysis},
  volume={53},
  number={5},
  pages={5465--5506},
  year={2021},
  publisher={SIAM}
}

@inproceedings{balazs2015near,
  title={Near-optimal max-affine estimators for convex regression},
  author={Bal{\'a}zs, G{\'a}bor and Gy{\"o}rgy, Andr{\'a}s and Szepesv{\'a}ri, Csaba},
  booktitle={Artificial Intelligence and Statistics},
  pages={56--64},
  year={2015},
  organization={PMLR}
}

@article{magnani2009convex,
  title={Convex piecewise-linear fitting},
  author={Magnani, Alessandro and Boyd, Stephen P},
  journal={Optimization and Engineering},
  volume={10},
  pages={1--17},
  year={2009},
  publisher={Springer}
}

@article{calafiore2019log,
  title={Log-sum-exp neural networks and posynomial models for convex and log-log-convex data},
  author={Calafiore, Giuseppe C and Gaubert, Stephane and Possieri, Corrado},
  journal={IEEE transactions on neural networks and learning systems},
  volume={31},
  number={3},
  pages={827--838},
  year={2019},
  publisher={IEEE}
}

@article{kim2022parameterized,
  title={Parameterized convex universal approximators for decision-making problems},
  author={Kim, Jinrae and Kim, Youdan},
  journal={IEEE Transactions on Neural Networks and Learning Systems},
  volume={35},
  number={2},
  pages={2448--2459},
  year={2022},
  publisher={IEEE}
}

@article{warin2023groupmax,
  title={The GroupMax neural network approximation of convex functions},
  author={Warin, Xavier},
  journal={IEEE Transactions on Neural Networks and Learning Systems},
  year={2023},
  publisher={IEEE}
}

@inproceedings{amos2017input,
  title={Input convex neural networks},
  author={Amos, Brandon and Xu, Lei and Kolter, J Zico},
  booktitle={International conference on machine learning},
  pages={146--155},
  year={2017},
  organization={PMLR}
}

@article{chaudhari2024gradient,
  title={Gradient Networks},
  author={Chaudhari, Shreyas and Pranav, Srinivasa and Moura, Jos{\'e} MF},
  journal={IEEE Transactions on Signal Processing},
  year={2024},
  publisher={IEEE}
}

@article{lorraine2024jacnet,
  title={Jacnet: Learning functions with structured jacobians},
  author={Lorraine, Jonathan and Hossain, Safwan},
  journal={arXiv preprint arXiv:2408.13237},
  year={2024}
}

@article{richter2021input,
  title={Input convex gradient networks},
  author={Richter-Powell, Jack and Lorraine, Jonathan and Amos, Brandon},
  journal={arXiv preprint arXiv:2111.12187},
  year={2021}
}

@article{saremi2019approximating,
  title={On approximating $\nabla f$ with neural networks},
  author={Saremi, Saeed},
  journal={arXiv preprint arXiv:1910.12744},
  year={2019}
}

@article{chen2018optimal,
  title={Optimal control via neural networks: A convex approach},
  author={Chen, Yize and Shi, Yuanyuan and Zhang, Baosen},
  journal={arXiv preprint arXiv:1805.11835},
  year={2018}
}

@article{huang2020convex,
  title={
  Potential flows: Universal probability distributions with optimal transport and convex optimization},
  author={Huang, Chin-Wei and Chen, Ricky TQ and Tsirigotis, Christos and Courville, Aaron},
  journal={arXiv preprint arXiv:2012.05942},
  year={2020}
}

@article{alvarez2021optimizing,
  title={Optimizing functionals on the space of probabilities with input convex neural networks},
  author={Alvarez-Melis, David and Schiff, Yair and Mroueh, Youssef},
  journal={arXiv preprint arXiv:2106.00774},
  year={2021}
}

@book{van1993theory,
  title={Theory of convex structures},
  author={van De Vel, Marcel LJ},
  volume={50},
  year={1993},
  publisher={Elsevier}
}

@book{pallaschke2013foundations,
  title={Foundations of mathematical optimization: convex analysis without linearity},
  author={Pallaschke, Diethard Ernst and Rolewicz, Stefan},
  volume={388},
  year={2013},
  publisher={Springer Science \& Business Media}
}

@article{singer1997abstract,
  title={Abstract convex analysis},
  author={Singer, Ivan},
  journal={(No Title)},
  year={1997}
}

@book{rubinov2013abstract,
  title={Abstract convexity and global optimization},
  author={Rubinov, Alexander M},
  volume={44},
  year={2013},
  publisher={Springer Science \& Business Media}
}

@article{rochet1987necessary,
  title={A necessary and sufficient condition for rationalizability in a quasi-linear context},
  author={Rochet, Jean-Charles},
  journal={Journal of mathematical Economics},
  volume={16},
  number={2},
  pages={191--200},
  year={1987},
  publisher={Elsevier}
}

@article{fan2020scalable,
  title={Scalable computations of wasserstein barycenter via input convex neural networks},
  author={Fan, Jiaojiao and Taghvaei, Amirhossein and Chen, Yongxin},
  journal={arXiv preprint arXiv:2007.04462},
  year={2020}
}

@article{chen2018neural,
  title={Neural ordinary differential equations},
  author={Chen, Ricky TQ and Rubanova, Yulia and Bettencourt, Jesse and Duvenaud, David K},
  journal={Advances in neural information processing systems},
  volume={31},
  year={2018}
}

@inproceedings{amos2017optnet,
  title={OptNet: Differentiable optimization as a layer in neural networks},
  author={Amos, Brandon and Kolter, J Zico},
  booktitle={Proceedings of the 34th International Conference on Machine Learning},
  year={2017},
  organization={PMLR}
}

@inproceedings{amos2018dmpc,
  title={Differentiable MPC for End-to-end Planning and Control},
  author={Amos, Brandon and Jimenez Rodriguez, Ivan Dario and Sacks, Jacob and Boots, Byron and Kolter, J Zico},
  booktitle={Advances in Neural Information Processing Systems},
  year={2018}
}

@article{campbell2020ddn,
  title={Deep declarative networks},
  author={Gould, Stephen and Hartley, Richard and Campbell, Dylan},
  journal={IEEE Transactions on Pattern Analysis and Machine Intelligence},
  volume={44},
  number={8},
  pages={3988--4004},
  year={2021},
  publisher={IEEE}
}

@inproceedings{agrawal2019differentiable,
  title={Differentiable convex optimization layers},
  author={Agrawal, Akshay and Amos, Brandon and Barratt, Shane and Boyd, Stephen and Diamond, Stephen and Kolter, J Zico},
  booktitle={Advances in Neural Information Processing Systems},
  year={2019}
}

@article{brenier1991polar,
  title={Polar factorization and monotone rearrangement of vector-valued functions},
  author={Brenier, Yann},
  journal={Communications on pure and applied mathematics},
  volume={44},
  number={4},
  pages={375--417},
  year={1991},
  publisher={Wiley Online Library}
}

@article{ekeland2010notes,
  title={Notes on optimal transportation},
  author={Ekeland, Ivar},
  journal={Economic Theory},
  pages={437--459},
  year={2010},
  publisher={JSTOR}
}

@book{cannarsa2004semiconcave,
  title={Semiconcave Functions, Hamilton—Jacobi Equations, and Optimal Control},
  author={Cannarsa, Piermarco and Sinestrari, Carlo},
  year={2004},
  publisher={Springer}
}

@inproceedings{giannakopoulos2014duality,
  title={Duality and optimality of auctions for uniform distributions},
  author={Giannakopoulos, Yiannis and Koutsoupias, Elias},
  booktitle={Proceedings of the fifteenth ACM conference on Economics and computation},
  pages={259--276},
  year={2014}
}

@article{joswig2022generalized,
author = {Joswig, Michael and Klimm, Max and Spitz, Sylvain},
title = {Generalized Permutahedra and Optimal Auctions},
journal = {SIAM Journal on Applied Algebra and Geometry},
volume = {6},
number = {4},
pages = {711-739},
year = {2022},
doi = {10.1137/21M1441286},
URL = { 
    
        https://doi.org/10.1137/21M1441286
},
eprint = { 
    
        https://doi.org/10.1137/21M1441286    

}
,
    abstract = { Abstract. We study a family of convex polytopes, called SIM-bodies, which were introduced by Giannakopoulos and Koutsoupias in 2018 to analyze so-called Straight-Jacket Auctions. First, we show that the SIM-bodies belong to the class of generalized permutahedra. Second, we prove an optimality result for the Straight-Jacket Auctions among certain deterministic auctions. Third, we employ computer algebra methods and mathematical software to explicitly determine optimal prices and revenues. }
}

@article{rahme2020auction,
  title={Auction learning as a two-player game},
  author={Rahme, Jad and Jelassi, Samy and Weinberg, S Matthew},
  journal={arXiv preprint arXiv:2006.05684},
  year={2020}
}

@article{nedelec2019adversarial,
  title={Adversarial learning for revenue-maximizing auctions},
  author={Nedelec, Thomas and Baudet, Jules and Perchet, Vianney and Karoui, Noureddine El},
  journal={arXiv preprint arXiv:1909.06806},
  year={2019}
}

@article{shen2018automated,
  title={Automated mechanism design via neural networks},
  author={Shen, Weiran and Tang, Pingzhong and Zuo, Song},
  journal={arXiv preprint arXiv:1805.03382},
  year={2018}
}

@article{zheng2022ai,
  title={The AI Economist: Taxation policy design via two-level deep multiagent reinforcement learning},
  author={Zheng, Stephan and Trott, Alexander and Srinivasa, Sunil and Parkes, David C and Socher, Richard},
  journal={Science advances},
  volume={8},
  number={18},
  pages={eabk2607},
  year={2022},
  publisher={American Association for the Advancement of Science}
}

@article{koster2022human,
  title={Human-centred mechanism design with Democratic AI},
  author={Koster, Raphael and Balaguer, Jan and Tacchetti, Andrea and Weinstein, Ari and Zhu, Tina and Hauser, Oliver and Williams, Duncan and Campbell-Gillingham, Lucy and Thacker, Phoebe and Botvinick, Matthew and others},
  journal={Nature Human Behaviour},
  volume={6},
  number={10},
  pages={1398--1407},
  year={2022},
  publisher={Nature Publishing Group UK London}
}

@inproceedings{makkuva2020optimal,
  title={Optimal transport mapping via input convex neural networks},
  author={Makkuva, Ashok and Taghvaei, Amirhossein and Oh, Sewoong and Lee, Jason},
  booktitle={International Conference on Machine Learning},
  pages={6672--6681},
  year={2020},
  organization={PMLR},
}

@article{korotin2022neural,
  title={Neural optimal transport},
  author={Korotin, Alexander and Selikhanovych, Daniil and Burnaev, Evgeny},
  journal={arXiv preprint arXiv:2201.12220},
  year={2022}
}

@article{korotin2022kernel,
  title={Kernel neural optimal transport},
  author={Korotin, Alexander and Selikhanovych, Daniil and Burnaev, Evgeny},
  journal={arXiv preprint arXiv:2205.15269},
  year={2022}
}

@article{rout2021generative,
  title={Generative modeling with optimal transport maps},
  author={Rout, Litu and Korotin, Alexander and Burnaev, Evgeny},
  journal={arXiv preprint arXiv:2110.02999},
  year={2021}
}

@article{myerson1981optimal,
  title={Optimal auction design},
  author={Myerson, Roger B},
  journal={Mathematics of operations research},
  volume={6},
  number={1},
  pages={58--73},
  year={1981},
  publisher={INFORMS}
}

@book{villani2008optimal,
  title={Optimal transport: old and new},
  author={Villani, C{\'e}dric and others},
  volume={338},
  year={2008},
  publisher={Springer}
}

@inproceedings{dutting2019optimal,
  title={Optimal auctions through deep learning},
  author={D{\"u}tting, Paul and Feng, Zhe and Narasimhan, Harikrishna and Parkes, David and Ravindranath, Sai Srivatsa},
  booktitle={International Conference on Machine Learning},
  pages={1706--1715},
  year={2019},
  organization={PMLR}
}

@inproceedings{curry2022learning,
  title={Learning revenue-maximizing auctions with differentiable matching},
  author={Curry, Michael J and Lyi, Uro and Goldstein, Tom and Dickerson, John P},
  booktitle={International Conference on Artificial Intelligence and Statistics},
  pages={6062--6073},
  year={2022},
  organization={PMLR}
}

@article{curry2022differentiable,
  title={Differentiable economics for randomized affine maximizer auctions},
  author={Curry, Michael and Sandholm, Tuomas and Dickerson, John},
  journal={arXiv preprint arXiv:2202.02872},
  year={2022}
}

@article{ivanov2022optimal,
  title={Optimal-er auctions through attention},
  author={Ivanov, Dmitry and Safiulin, Iskander and Filippov, Igor and Balabaeva, Ksenia},
  journal={Advances in Neural Information Processing Systems},
  volume={35},
  pages={34734--34747},
  year={2022}
}

@article{duan2023scalable,
  title={A scalable neural network for DSIC affine maximizer auction design},
  author={Duan, Zhijian and Sun, Haoran and Chen, Yurong and Deng, Xiaotie},
  journal={Advances in Neural Information Processing Systems},
  volume={36},
  pages={56169--56185},
  year={2023}
}
\bibliographystyle{plainnat}

\appendix
\section{Appendix}\label{appendix}
\subsection{Useful Properties of Generalized Convexity}\label{appendix:generalized-convexity}
\paragraph{Theorem (Basic properties).}\label{thm:basic-properties}
If $f,g:\tilde{X} \to \R$, then for all $(x,y)\in\tilde{X}\times\tilde{Y}$:
\begin{align*}
 \Phi(x,y) \le f^{\tilde{X}}(y) + f(x) \qquad \qquad
 f^{\tilde{Y}\tilde{X}}(x) \le f(x) \qquad \qquad
 f \le g \implies f^{\tilde{X}} \ge g^{\tilde{X}}
\end{align*}
\emph{Proof.} \label{proof:basic-properties}
By definition of the $\tilde{X}$-transform,
\begin{align*}
f^{\tilde{X}}(y) &= \sup_{x'\in\tilde{X}}
  \Phi(x',y) - f(x')\\
&\ge \Phi(x,y) - f(x),
\end{align*}
which yields the first inequality.

For the second property,
\begin{align*}
f^{\tilde{Y}\tilde{X}}(x) &= \sup_{y\in\tilde{Y}}  \Phi(x,y) - f^{\tilde{X}}(y) \\
&\le \sup_{y\in\tilde{Y}}  f(x)  = f(x),
\end{align*}
where the inequality follows from the first property.

For the third property, if $f \le g$ then
\begin{align*}
g^{\tilde{X}}(y) &= \sup_{x\in\tilde{X}} \Phi(x,y) - g(x)\\
&\le \sup_{x\in\tilde{X}} \Phi(x,y) - f(x) = f^{\tilde{X}}(y).
\end{align*}
\hfill $\square$

A useful analogue to the invariance under biconjugation property of convex functions is the following:

\paragraph{Theorem (Generalized biconjugation).}\label{thm:biconjugation}
Let $f:\,\tilde{X}\to\R$. Then $f$ is $\tilde{Y}$-convex if and only if
\[f = (f^{\tilde{Y}\tilde{X}})_{|\tilde{X}}\]
\emph{Proof.} \label{proof:biconjugation}
If $f$ equals its generalized biconjugate on $\tilde{X}$ then it is, by definition, the transform of $f^{\tilde{X}}$ and hence $\tilde{Y}$-convex. Conversely, suppose $f$ is $\tilde{Y}$-convex so there is $g$ with $f=(g^{\tilde{Y}})_{|\tilde{X}}$. Applying the $\tilde{X}$-transform and then the $\tilde{Y}$-transform yields
\[f^{\tilde{X}}=(g^{\tilde{Y}})^{\tilde{X}}=g^{\tilde{X}\tilde{Y}}\le g,
\]
where the last inequality is implied by the Basic properties theorem above. Restricting back to $\tilde{X}$ and taking transforms gives
\[ (f^{\tilde{Y}\tilde{X}})_{|\tilde{X}} \ge (g^{\tilde{Y}})_{|\tilde{X}} = f,\]
which together with the general inequality $f^{\tilde{Y}\tilde{X}}\le f$ proves equality.
\hfill $\square$

\begin{remark}
Here the notation $f^{\tilde{Y}\tilde{X}}$ means that we first take the $\tilde{Y}$-transform and then the $\tilde{X}$-transform of $f$. Thus the identity $f = (f^{\tilde{Y}\tilde{X}})_{|\tilde{X}}$ is the natural analogue of $f=(f^{**})_{|X}$ in the classical convex-conjugate setting, and the order of transforms matches the usual biconjugation convention.
\end{remark}

As a simple corollary, distinct $\tilde{Y}$-convex functions have distinct $\tilde{X}$-transforms.

\subsection{Proofs}

% \subsection{Background (Section~\ref{sec:background})}
% \paragraph{Proof of Theorem \ref{thm:basic-properties} (Basic properties).}\label{proof:basic-properties}
% \paragraph{Proof of Theorem \ref{thm:biconjugation} (Generalized biconjugation).}\label{proof:biconjugation}

\subsubsection{Method (Section~\ref{sec:method})}

\paragraph{Proof of Proposition (Uniform approximation of $Y$-convex functions).}\label{proof:uniform-approx}
Since $\Phi$ has a Lipschitz constant $\lambda$, for any $y_1, y_2 \in Y$,
\begin{align*}
|y_1 - y_2| < \frac{\epsilon}{2\lambda} \implies |\Phi(x, y_1) - \Phi(x, y_2)| < \frac{\epsilon}{2}.
\end{align*}
Since $f$ is $Y$-convex,
\begin{align*}
f(x) = f^{Y X}(x) = \sup_{y \in Y} \{\Phi(x, y) - f^X(y)\},
\end{align*}
where $f^X$ is $X$-convex and also has Lipschitz constant $\lambda$. Thus,
\begin{align*}
|y_1 - y_2| < \frac{\epsilon}{2\lambda} \implies |f^X(y_1) - f^X(y_2)| < \frac{\epsilon}{2}.
\end{align*}
Under the compactness assumption in Section~\ref{sec:method}, $X \times Y$ is compact and metric, hence totally bounded. So we can cover $Y$ with finitely many balls of radius $\frac{\epsilon}{2\lambda}$; let the centers be $\tilde{Y} = \{y_1, \dots, y_k\}$.
Since $\tilde{Y} \subseteq Y$, we have $f^{\tilde{Y} X} \leq f^{Y X} = f$. To show the reverse inequality, observe that any $y \in Y$ is within distance $\frac{\epsilon}{2\lambda}$ of some $y_i \in \tilde{Y}$, so
\begin{align*}
  \Phi(x, y) - f^X(y) \le \Phi(x, y_i) - f^X(y_i) + \epsilon.
\end{align*}
Therefore,
\begin{align*}
  f^{Y X}(x) &= \sup_{y \in Y} \{\Phi(x, y) - f^X(y)\} \\
        &\le \sup_{y_i \in \tilde{Y}} \{\Phi(x, y_i) - f^X(y_i) + \epsilon\} \\
        &= f^{\tilde{Y} X}(x) + \epsilon.
\end{align*}
\hfill $\square$

\paragraph{Proof of Proposition (Finitely $Y$-convex functions are $Y$-convex).}\label{proof:finite-subset}
Let $f\in\mathcal{C}^{\tilde{Y}}(X)$ for some finite $\tilde{Y}\subseteq Y$. By definition there is a function $g_1$ on $\tilde{Y}$ such that
\[
f = (g_1^{\tilde{Y}})_{|X}.
\]
Define a function $g_2$ on $Y$ by
\[
g_2(y) =
\begin{cases}
g_1(y), & y\in \tilde{Y},\\
 +\infty, & y\in Y\setminus \tilde{Y}.
\end{cases}
\]
Then $g_2^{Y}$ coincides with $g_1^{\tilde{Y}}$ on $X$, so $f=(g_2^{Y})_{|X}$ and hence $f\in\mathcal{C}^{Y}(X)$. Since $f$ was an arbitrary finitely $Y$-convex function, this shows $\mathcal{FC}^Y(X)\subseteq \mathcal{C}^{Y}(X)$.
\hfill $\square$

\paragraph{Proof of Proposition (Stability of gradients under semiconvex convergence).}\label{proof:semiconvex-stability}
We can rely on the results concerning convex functions. Define $h_n(x) = f_n(x) + \frac{K}{2} \|x\|_2^2$ and $h(x) = f(x) + \frac{K}{2} \|x\|_2^2$. Then $h_n$ and $h$ are convex, so $\nabla h_n \to \nabla h$ uniformly where defined. Since $\nabla f_n = \nabla h_n - \frac{K}{2}  \nabla \|x\|_2^2$ and $\nabla f = \nabla h - \frac{K}{2}  \nabla \|x\|_2^2$, we obtain $\nabla f_n \to \nabla f$ uniformly where the gradients exist.
\hfill $\square$

\paragraph{Proof of Proposition (Semiconvexity is preserved).}\label{proof:semiconvex-preserve}
By equi-semiconvexity of the family $\{\Phi(\cdot,y)\}_{y\in Y}$, there exists $K\ge 0$ such that for every $y\in \tilde{Y}$ the function $x\mapsto \Phi(x,y)+\frac{K}{2}\|x\|_2^2$ is convex. Let $f\in\mathcal{C}^{\tilde{Y}}(X)$, so $f(x)=\sup_{y\in\tilde{Y}}\{\Phi(x,y)-g(y)\}$ for some $g$ on $\tilde{Y}$. Then
\[
f(x)+\frac{K}{2}\|x\|_2^2=\sup_{y\in\tilde{Y}}\Big\{\Phi(x,y)+\frac{K}{2}\|x\|_2^2-g(y)\Big\},
\]
which is a supremum of convex functions and hence convex. Therefore $f$ is semiconvex with constant $K$.
\hfill $\square$

\paragraph{Proof of Theorem (Density of finitely $Y$-convex functions).}\label{proof:fc-density}
By Proposition~\ref{uniform-approx}, for any $\epsilon>0$ and any $f\in\mathcal{C}^Y(X)$ there is a finite $\tilde{Y}$ and $g\in\mathcal{C}^{\tilde{Y}}(X)$ with $\|f-g\|_\infty<\epsilon$. Proposition~\ref{finite-subset} shows $\mathcal{C}^{\tilde{Y}}(X)\subseteq \mathcal{C}^Y(X)$. Hence $\overline{\mathcal{FC}^Y(X)}=\mathcal{C}^Y(X)$.
\hfill $\square$

\paragraph{Proof of Theorem (Universal approximation for gradients).}\label{proof:nablafc-density}
Let $(g_n)_n\subset \mathcal{FC}^Y(X)$ uniformly approximate $f\in\mathcal{C}^Y(X)$ (Theorem above). Assume the family $\{\Phi(\cdot,y)\}_{y\in Y}$ is equi-semiconvex with constant $K$. By Proposition~\ref{semiconvex-preserve}, each $g_n$ and $f$ are semiconvex with the same constant $K$. By Proposition~\ref{prop:semiconvex-stability}, $\nabla g_n\to \nabla f$ uniformly where gradients exist. Thus $\overline{\nabla \mathcal{FC}^Y(X)}=\nabla \mathcal{C}^Y(X)$.
\hfill $\square$

\paragraph{Proof of Theorem (Smooth approximation).}\label{proof:smooth-density}
Recall that for any real numbers $a_1,\dots,a_m$ and $\tau>0$,
\[\max_i a_i \le \operatorname{LSE}^\tau(a_1,\dots,a_m) \le \max_i a_i + \tfrac{\log m}{\tau}.\]
Fix finite $\tilde{Y}$ and define $h(x)=\max_{y\in\tilde{Y}} \{\Phi(x,y)-g(y)\}$ and its smoothed version $h_\tau(x)=\operatorname{LSE}^\tau(\{\Phi(x,y)-g(y)\}_{y\in\tilde{Y}})$. Then $\|h_\tau-h\|_\infty\le \tfrac{\log |\tilde{Y}|}{\tau}$. Combining with the uniform approximation of $\mathcal{C}^Y(X)$ by finitely $Y$-convex functions yields that $\bigcup_{\tau} \mathcal{FC}^{Y^\tau}(X)$ uniformly approximates $\mathcal{C}^Y(X)$. Moreover, when the family $\{\Phi(\cdot,y)\}_{y\in Y}$ is equi-semiconvex, each $h_\tau$ is smooth and semiconvex; as $\tau\to\infty$, $h_\tau\to h$ uniformly and $\nabla h_\tau\to \nabla h$ pointwise wherever the maximizer is unique (a.e. under mild conditions), giving pointwise density of gradients.
\hfill $\square$

\subsection{Graphs}

\begin{figure*}[!htbp]
\centering
\resizebox{\textwidth}{!}{%
\begin{tikzpicture}[
    neuron1/.style={rectangle, rounded corners, draw, minimum width=.8cm, minimum height=0.4cm, fill=green!10, drop shadow, font=\scriptsize},
    neuron2/.style={rectangle, rounded corners, draw, minimum width=4cm, minimum height=0.4cm, fill=green!10, drop shadow, font=\scriptsize},
    layerlabel/.style={align=center, text width=1.5cm, font=\scriptsize},
    arrowstyle/.style={->, gray, line width=0.3pt, >=stealth, scale=0.6, shorten <=3pt, shorten >=6pt},
    every node/.style={font=\scriptsize}
  ]

\begin{scope}[xshift=\globalshift]

\foreach \copy in {0,1} {
  \begin{scope}[xshift=\copy*\diagramsep]

    % Input layer nodes
    \node[neuron1] (I-1) at (0,0) {$x_1$};
    % \node[neuron1] (I-2) at (0,-\nodesep) {$x_2$};
    \node at (0, { -2*\nodesep + 0.1cm }) {\tiny$\vdots$};
    % \node[neuron1] (I-n1) at (0,-3*\nodesep) {$x_{n-1}$};
    \node[neuron1] (I-n) at (0,-4*\nodesep) {$x_n$};

    % Hidden layer nodes
    \ifnum\copy=0
      \node[neuron2] (H-1) at (\layersep,0) {$h_1=\langle w^1 \mid x\rangle - b_1$};
      % \node[neuron2] (H-2) at (\layersep,-\nodesep) {$h_2=\sigma(w^2\!\cdot\!x+b_2)$};
      \node at (\layersep, { -2*\nodesep + 0.1cm }) {\tiny$\vdots$};
      % \node[neuron2] (H-m1) at (\layersep,-3*\nodesep) {$h_{m-1}=\sigma(w^{m-1}\!\cdot\!x+b_{m-1})$};
      \node[neuron2] (H-m) at (\layersep,-4*\nodesep) {$h_{m}=\langle w^{m} \mid x\rangle - b_{m}$};
    \else
      \node[neuron2] (H-1) at (\layersep,0) {$h_1 = \Phi(x, y^1) - p_1$};
      % \node[neuron2] (H-2) at (\layersep,-\nodesep) {$h_2 = \Phi(x, y^2) - r(y^2)$};
      \node at (\layersep, { -2*\nodesep + 0.1cm }) {\tiny$\vdots$};
      % \node[neuron2] (H-m1) at (\layersep,-3*\nodesep) {$h_{m-1} = \Phi(x, y^{m-1}) - r(y^{m-1})$};
      \node[neuron2] (H-m) at (\layersep,-4*\nodesep) {$h_{m} = \Phi(x, y^{m}) - p_{m}$};
    \fi

    % Output node
    \node[neuron2, minimum width=2.0cm] (O) at ($(2*\layersep, -2*\nodesep) + (\outputshift, 0)$) {$o = \max_i h_i$};

    % Connections
    \foreach \i in {1,n} {
      \foreach \h in {1,m} {
        \draw[arrowstyle] (I-\i.east) -- (H-\h.west);
      }
    }
    \foreach \h in {1,m} {
      \draw[arrowstyle] (H-\h.east) -- (O.west);
    }

    % Layer labels
    \node[layerlabel] at ($(I-1)+(0, \labsep)$) {Input\\layer};
    \node[layerlabel] at ($(H-1)+(0, \labsep)$) {Hidden\\layer};
    \node[layerlabel] at ($(O)+(0, \labsep)$) {Output\\layer};

    % Top labels
    \ifnum\copy=0
      \node[font=\bfseries, yshift=2.cm] at ($(H-1)!0.5!(H-m)$) {Maxout (inner-product case)};
    \else
      \node[font=\bfseries, yshift=2.cm] at ($(H-1)!0.5!(H-m)$) {Finitely $Y$-convex};
    \fi

  \end{scope}
}
\end{scope}
\end{tikzpicture}
}
\caption{Maxout analogy for finitely $Y$-convex models. The left diagram depicts a maxout-type unit: affine branches $x\mapsto \langle w^i \mid x\rangle - b_i$ aggregated by $\max$. The right diagram depicts a finitely $Y$-convex model: kernel branches $x\mapsto \Phi(x,y^i)-p_i$ aggregated by $\max$. In the inner-product case $\Phi(x,y)=\langle x,y\rangle$, the right-hand model reduces to the left-hand max-affine form.}
\label{fig:neural-ot}
\end{figure*}

\begin{figure*}[!tb]
    \centering
    \subfloat[]{\includegraphics[width=0.48\linewidth]{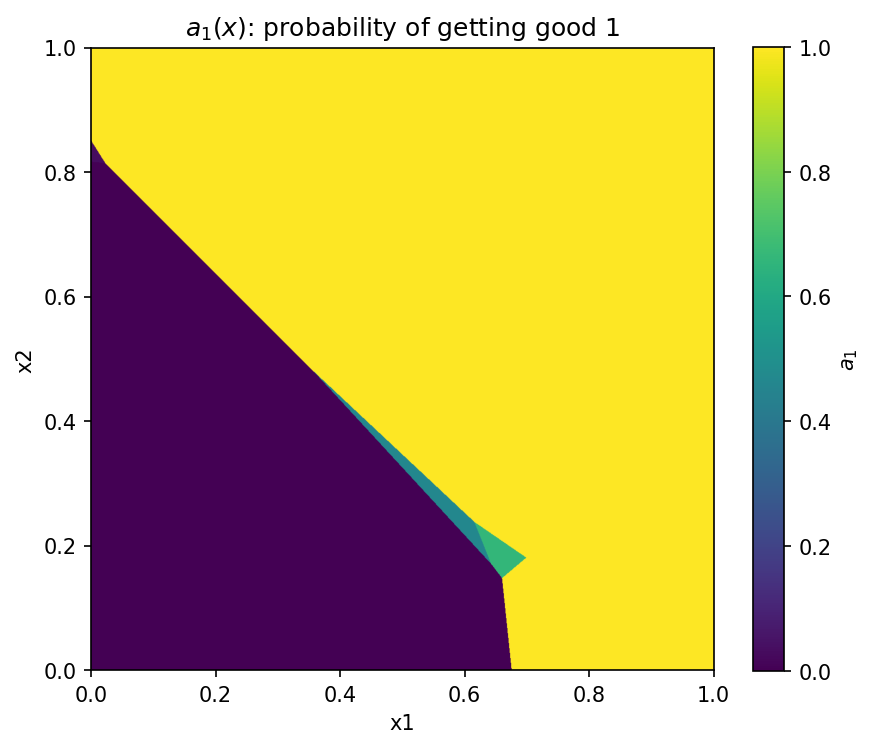}}
    \hfill
    \subfloat[]{\includegraphics[width=0.48\linewidth]{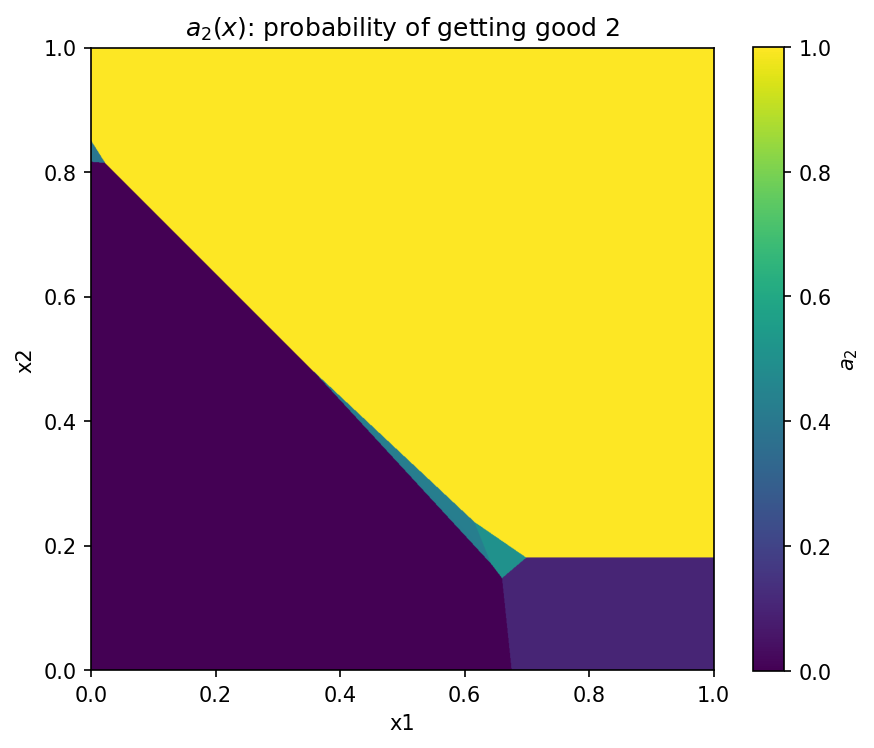}}
    \caption{Auction learned for the two-item case for Setting (A).
    The allocation is neither pure bundling nor separate selling. Similar to SJa, it prices combinations of items and exhibits bunching.}
    \label{fig:2d}
\end{figure*}

\end{document}